\documentclass{amsart}

\usepackage[table]{xcolor}

\usepackage{hyperref}

\usepackage{enumerate}
\usepackage{upgreek}
\usepackage{bm}
\usepackage{pstool}


\usepackage{thmtools}
\usepackage{thm-restate}


\usepackage{url}

	{\begin{proof}[\underline{Proof of {#1}}\textnormal{:}]}%
	{\end{proof}}

\definecolor{darkslateblue}{rgb}{0.28, 0.24, 0.55}
\definecolor{antiquefuchsia}{rgb}{0.57, 0.36, 0.51}
\definecolor{desertsand}{rgb}{0.93, 0.79, 0.69}






\DeclareFontFamily{U}{MnSymbolC}{}
\DeclareSymbolFont{MnSyC}{U}{MnSymbolC}{m}{n}
\DeclareMathSymbol{\diamonddot}{\mathbin}{MnSyC}{"7E}
\DeclareFontShape{U}{MnSymbolC}{m}{n}{
    <-6>  MnSymbolC5
   <6-7>  MnSymbolC6
   <7-8>  MnSymbolC7
   <8-9>  MnSymbolC8
   <9-10> MnSymbolC9
  <10-12> MnSymbolC10
  <12->   MnSymbolC12}{}




\newcommand{\natnumb}{\mathbb{N}^{+}}



\usepackage{amssymb}
\usepackage{amsthm}
\usepackage{tikz}

\theoremstyle{definition}

\newtheorem{theorem}{Theorem}[subsection]

\newtheorem{definition}[theorem]{Definition}
\newtheorem{lemma}[theorem]{Lemma}
\newtheorem{prop}[theorem]{Proposition}
\newtheorem{remark}[theorem]{Remark}
\newtheorem{corollary}[theorem]{Corollary}

\newtheorem{problem}{Problem}

\newcommand{\Rd}[2]{R_{#1#2}}
\newcommand{\RMd}{\Rd{\Model}{d}}
\newcommand{\RFd}{\Rd{\field}{d}}

\newcommand{\RMUd}{\Rd{\Model^\ultrafilter}{d}}

\newcommand{\RMdU}{(\Rd{\Model}{d})^\ultrafilter}

\newcommand{\CA}[1]{\mathfrak{Cs}\, #1\,}

\newcommand{\ca}[1]{\mathsf{Conc}\,#1\,}

\newcommand{\can}[1]{\mathsf{Conc}_n\, #1\, }

\newcommand{\defeq}{\ \stackrel{\textsf{\tiny def}}{=} \ }
\newcommand{\defiff}{\stackrel{\textsf{\tiny def}}{\iff}}
\newcommand{\ie}{i.e., }
\newcommand{\eg}{e.g., }

\newcommand{\Id}{\mathsf{Id}} 


\newcommand{\vv}[2]{\vec{{#1}}_{#2}}


\newcommand{\flatten}{\mathsf{flatten}}
\newcommand{\unflatten}{\mathsf{unflatten}}
\newcommand{\dtuple}[1]{\laa {#1} \raa}
\newcommand{\Uequiv}{\sim_{\ultrafilter}}
\newcommand{\fromIto}{{}^I\!}
\newcommand{\Uproj}{\vec{\mathbf{\pi}}}
\newenvironment{blockindent}
  {\bigskip\hfill\begin{minipage}{0.95\linewidth}}
  {\end{minipage}\bigskip}
\newcommand{\keyformula}{\mathsf{\kappa}}
\newcommand{\keysymbol}{\laa \keyformula, 3 \raa}
\newcommand{\keyrelation}{\mathsf{K}}
\newcommand{\basicaff}{A_{\vecfe{e}_0:\vec{\mathbf{e}}}}
\newcommand{\basicaffital}{A_{\vec{e}_0:\vec{\boldsymbol{e}}}}
\newcommand{\vecfe}[1]{\vec{\fe{{#1}}}}
\newcommand{\hatfe}[1]{\hat{\fe{{#1}}}}


\newcommand{\RelSet}{\Psi} 

\newcommand{\laa}{\langle}
\newcommand{\raa}{\rangle}
\newcommand{\la}{\langle}
\newcommand{\ra}{\rangle}
\newcommand{\Fm}{\mathit{Fm}}
\newcommand{\formulas}{\Fm_{\lang}}

\newcommand{\lang}{{\mathcal{L}}}
\newcommand{\langD}{\lang_{\RelSet}}
\newcommand{\cand}{\mathcal{R}\mathit{Symb}_{\lang}}
\newcommand{\AffTr}{\mathsf{AffineTrf}}


\newcommand{\minkkerning}{\hspace{-1pt}}

\newcommand{\Tr}{\mathsf{Tr}}

\newcommand{\minkst}{\mathcal{M}\minkkerning\mathit{ink}}

\newcommand{\newtst}{\mathcal{N}\!\mathit{ewt}}

\newcommand{\galst}{\mathcal{G}\mathit{al}}

\newcommand{\euclg}{\mathcal{E}\!\mathit{ucl}}

\newcommand{\cng}[1]{\bm{{ \cong}}_{#1}}

\newcommand{\llcon}{\bm{\uplambda}}

\newcommand{\Rodel}{{\mathfrak{R}}}
\newcommand{\Nodel}{{\mathfrak{N}}}
\newcommand{\Model}{{\mathfrak{M}}}

\newcommand{\origin}{\vec{\textsf{\small o}}}
\newcommand{\unit}{\vec{\textsc e}}

\newcommand{\aut}[1]{\mathsf{Aut}\; #1}
\newcommand{\autind}[1]{\widetilde{\mathsf{Aut}}\;#1}
\newcommand{\AffAut}[1]{\mathsf{AffAut}\,#1}


\newcommand{\fe}[1]{{\mathrm{#1}}}
\newcommand{\fv}[1]{{#1}}
\newcommand{\gv}[1]{{\boldsymbol{\mathit{#1}}}}
\newcommand{\gee}[1]{{\mathsf{#1}}}

\newcommand{\OAff}{\mathcal{OA}\mathit{ff}}

\newcommand{\relst}{\mathcal{R}\mathit{el}}

\newcommand{\Lclst}{\mathcal{LC}\mathit{lass}}

\newcommand{\field}{\mathfrak{F}}

\newcommand{\udfield}{{\mathfrak{F}^{\ultrafilter}}}

\newcommand{\rel}{\mathsf{R}}

\newcommand{\ultrafilter}{\mathcal{U}}
\newcommand{\proj}{\pi}
\newcommand{\F}{F}
\newcommand{\geometry}{\mathcal{G}}
\newcommand{\Col}{{\mathsf{Col}}}

\newcommand{\Bw}{{\mathsf{Bw}}}

\title{Definable coordinate geometries over fields, part 1: theory} 
\author{Judit Madar\'asz}
\address{Judit Madar\'asz, HUN-REN Alfr\'ed R\'enyi Institute of Mathematics, Budapest, Hungary}
\email{madarasz.judit@renyi.hu}

\author{Mike Stannett}
\address{Mike Stannett, School of Computer Science, The University of Sheffield, Sheffield, UK}
\email{m.stannett@sheffield.ac.uk}

\author{Gergely Sz\'ekely}
\address{Gergely Sz\'ekely, HUN-REN Alfr\'ed R\'enyi Institute of Mathematics, Budapest, Hungary  \& University of Public Service, Budapest, Hungary}
\email{szekely.gergely@renyi.hu}

\date{\today}

\thanks{This research is supported by the Hungarian National
  Research, Development and Innovation Office (NKFIH), grants
  no.\ FK-134732 and TKP2021-NVA-16.}

\begin{document}
\maketitle

\begin{abstract} 
  We define general notions of coordinate
  geometries over fields and ordered fields, and consider coordinate
  geometries that are given by finitely many relations that are
  definable over those fields. We show that the automorphism group of
  such a geometry determines the geometry up to definitional
  equivalence; moreover, if we are given two such geometries
  $\geometry$ and $\geometry'$, then the concepts (explicitly
  definable relations) of $\geometry$ are concepts of $\geometry'$
  exactly if the automorphisms of $\geometry'$ are automorphisms of
  $\geometry$.  We show this by first proving that a relation is a
  concept of $\geometry$ exactly if it is closed under the
  automorphisms of $\geometry$ and is definable over the field;
  moreover, it is enough to consider automorphisms that are affine
  transformations.
\end{abstract}

Keywords: \keywords{
Definability; Coordinate geometries; Concepts; Erlangen program; Automorphisms  
}

\section{Introduction}

The most famous ancient work that treats geometry systematically and
axiomatically is Euclid's Elements (\textit{c}.\ 300 B.C.).  Another
important milestone was when Descartes opened a new analytic
perspective to geometry by introducing coordinate systems.  Later
Hilbert provided a formal axiom system for Euclidean geometry, which
Tarski modified to get an axiomatization within first-order logic. A
typical representation theorem for an axiom system such as Hilbert's
or Tarski's states that all its models are isomorphic to Cartesian
spaces over certain fields, and hence coordinatizable.

In this paper, we define general notions of coordinate geometries over
fields and ordered fields. This definition naturally extends to the
notion of coordinatizable geometries, \ie models that are
isomorphic to coordinate geometries.  For example, by Tarski's theorem
\cite[Thm.1]{Tarski59}, any model of his axiom system is isomorphic to
a two-dimensional coordinate geometry $\euclg$ (see
p.\pageref{def:eucl}) over some real closed field.  These kinds of
representation theorem show the strong connection between the
synthetic (purely axiomatic) and analytic (coordinate-system based)
approaches to geometry.  In \cite{CB20}, an axiomatization for
Minkowski spacetime is given in similar style.

The coordinate geometries we consider are those given
by finitely many relations definable over fields or ordered fields. We show that
the automorphism group of such a geometry determines the geometry up
to definitional equivalence; moreover, if we are given two such
geometries $\geometry$ and $\geometry'$, then the concepts (explicitly 
definable relations) of $\geometry$ are concepts of
$\geometry'$ exactly if the automorphisms of $\geometry'$ are
automorphisms of $\geometry$, see Corollary~\ref{cor-erlangen} and
Theorem~\ref{thm-erlangen}.  We show this by first proving that a
relation is a concept of $\geometry$ exactly if it is closed under the
automorphisms of $\geometry$ and is definable over the field;
moreover, it is enough to consider automorphisms that are affine
transformations, see Theorem~\ref{THM1}. Similar investigations on the
lattices of concept-sets of various structures can be found in
\cite{SSU14,MuSem20,SemSop21,SemSop22,SemSop24}.

Motivated by questions in the philosophy of physics, there is much
ongoing research focussed on finding and analyzing criteria which can
be used to determine when one mathematical object is structurally
richer than another, see \eg
\cite{North09,SwansonHalvorson12,Barrett14,Wilhelm21,Barrett22,BMW23}.
One such criterion is (SYM*), which is formulated in
\cite{SwansonHalvorson12} as: ``If $Aut(X)$ properly contains
$Aut(Y)$, then $X$ has less structure than $Y$.'' By
Corollary~\ref{cor-erlangen} on p.\pageref{cor-erlangen}, criterion
(SYM*) works perfectly for a large class of coordinate geometries when
`structure' is understood in terms of explicitly definable
relations.

This work is part of an ongoing project aiming to understand the
algebras of concepts of concrete mathematical structures and the
conceptual `distances' between them \cite{KSzLF20,KSz21,KSz25}.  By
Theorem~\ref{THM1}, in the case of many historically relevant
coordinate geometries, one can reduce the understanding of concepts to
an understanding of (affine) automorphisms, which can be useful for
solving problems associated with that project.  For example,
Theorem~\ref{THM1} is a key result used to prove a conjecture of
Hajnal Andr\'eka stating that adding any classical concept to the
concepts of special relativity results in the theory of late classical
kinematics \cite{Hconj}.

This research is part of the Andr\'eka--N\'emeti school's project to give
a logic-based foundation and understanding of relativity theories in
the spirit of the Vienna Circle and Tarski's initiative Logic,
Methodology and Philosophy of Science \cite{AMNNSz11,Friend15,FF21}.

\subsection*{Notation and conventions}
In general, we use standard model theoretic and set theoretic
notations, and we assume that the reader is familiar with basic
algebraic constructs like groups, fields and vector spaces. The symbol
$\Box$ indicates the end (or absence) of a proof. By a \emph{model},
we mean a first-order structure in the sense of classical model
theory.

The language of \emph{fields} contains binary operations $+$ and $\cdot$ for addition and multiplication and constants $0$ and $1$; in addition to these, the language of \emph{ordered fields} contains a binary relation $\leq$ for ordering. Other common operations, \eg subtraction, are derived in the usual way.

We write $\natnumb$ for the set of positive natural numbers, and fix
an enumeration $(\fv{v}_i)_{i \in \natnumb} =
(\fv{v}_1,\fv{v}_2,\fv{v}_3,\ldots)$ of distinct variables.

To abbreviate formulas, we use equality and operations on sequences of variables componentwise, \eg if $\vec{x}=\la v_1,\ldots, v_d\ra$ and $\vec{y}=\la v_{d+1},\ldots,v_{2d}\ra$, then $\vec{x}=\vec{y}$ abbreviates formula $v_1=v_{d+1}\land \dots \land v_d=v_{2d}$ and $\vec{y}+v\vec{x}$ denotes the sequence  $\la v_{d+1}+v_{3d+1}\cdot v_{1},\dots, v_{2d}+v_{3d+1}\cdot v_{d}\ra$ of terms (we take $v$ to stand for the next fresh variable, in this case $v_{3d+1}$).

Let $\rel\subseteq H^n$ be an arbitrary relation over a nonempty set
$H$ and let $f\colon H \to H$ be a map.  We say that $\rel$ is
\emph{closed under} $f$ if{}f $(a_1,\dots, a_n)\in \rel \implies
(f(a_1),\dots, f(a_n))\in\rel$, and that $f$ \emph{respects} $\rel$
if{}f $(a_1,\dots, a_n)\in \rel \iff (f(a_1),\dots, f(a_n))\in\rel$.
If $G$ is a set of functions from $H$ to $H$ that form a group under
composition, then $\rel$ is closed under all elements of
$G$ if{}f all elements of $G$ respect $\rel$. We often write
``$\rel(a_1,\dots,a_n)$'' in place of ``$(a_1,\dots,a_n) \in \rel$''.

Given models $\Model$ and $\Nodel$ for some first-order languages, we
write $M$ and $N$ to denote their universes, respectively. If
$(a_1,\ldots,a _n)\in M^n$ and $\varphi$ is a formula in the language
of $\Model$, we write $\Model\models\varphi[a_1,\ldots,a_n]$ to mean
that $\varphi$ is satisfied in model $\Model$ by any evaluation
$e\colon\{\fv{v}_1,\fv{v}_2,\ldots\}\to M$ of variables for which
$e(\fv{v}_1)=a_1$, \ldots, $e(\fv{v}_n)=a_n$. If $\varphi$ is a
formula, the expression $\varphi(\fv{v}_1,\ldots,\fv{v}_n)$ indicates
that the free variables of $\varphi$ come from the set $\{
\fv{v}_1,\ldots,\fv{v}_n \}$.

The set of automorphisms of model $\Model$ is denoted by
$\aut{\Model}$. We note that, if the language of model $\Model$
contains only relation symbols, then function $f\colon M\to M$ is an
automorphism of $\Model$ exactly if it is a bijection that respects
all the relations of $\Model$.

The inequality $\mathfrak{A}\leq\mathfrak{B}$ denotes that algebraic
structure $\mathfrak{A}$ is a subalgebra of algebraic structure
$\mathfrak{B}$. If $\Model$ is a model and $\rel$ is a relation on the
universe of $\Model$, then model $\laa \Model,\rel\raa$ is the
expansion of $\Model$ with relation $\rel$ (to some language that
contains exactly one extra relation symbol whose interpretation in
$\Model$ is $\rel$).

If $A$ and $B$ are sets, then both $A\subsetneq B$ and $B\supsetneq A$
mean that $A$ is a proper subset
of $B$.

\section{Concepts}

We say that an $n$-ary relation $\rel$ on $M$ is \emph{definable} in
$\Model$ if{}f there is a formula $\varphi(\fv{v}_1,\ldots,\fv{v}_n)$
in the language of $\Model$ that defines it; \ie for every
$(a_1,\ldots,a_n)\in M^n$, we have
\begin{equation}\label{eq:def-def}
(a_1,\ldots,a_n) \in \rel \quad\Longleftrightarrow\quad
\Model\models\varphi[a_1,\ldots,a_n].
\end{equation}

Definable $n$-ary relations will also be called $n$-ary
\emph{concepts} of $\Model$, and the set of $n$-ary concepts of
$\Model$ will be denoted by $\can{\Model}$.  The \emph{concept-set} of
$\Model$, \ie the set of all relations definable in $\Model$) is then
\[
\ca{\Model}\defeq\bigcup_{n\in\natnumb}\can{\Model}.
\]

We call models $\Model$ and $\Nodel$ \emph{definitionally equivalent} if{}f they have the same concepts, \ie $\ca{\Model}=\ca{\Nodel}$. This notion of definitional equivalence is just a reformulation of the usual one that best fits the context of this paper, see \eg \cite[p.51]{HMT71} or \cite[p.453]{Monk2000} for the standard definition. 

In particular, because the universe $M$ of $\Model$ is definable in $\Model$ as a unary relation, we have 
  \begin{equation*}
    \ca{\Model}\subseteq \ca{\Nodel} \implies M\subseteq N,
  \end{equation*}
  and hence
  \begin{equation*}
    \ca{\Model}= \ca{\Nodel} \implies M=N.
  \end{equation*}

  The cylindric-relativized set algebra obtained from model $\Model$
is denoted by $\CA{\Model}$; see Monk~\cite{Monk2000}.  
Roughly speaking, $\CA{\Model}$ is an algebraic structure
whose universe is basically the set of concepts of $\Model$,
whose operations correspond to the logical
connectives, and among whose constants are the ones that correspond to
logical \textit{true} and \textit{false}. To understand the present paper, it is not necessary
to be familiar with the notion of cylindric algebras. We use them only occasionally, to
reformulate some of our results in a more elegant algebraic form.
We note that,  
\begin{equation}
\label{subCA}
\CA{\Model}\leq\CA{\Nodel}\   \Longleftrightarrow \ \big(\ca{\Model}\subseteq\ca{\Nodel}
\text{ and } M=N\big).
\end{equation}

\section{Coordinate geometries over fields and ordered fields}

\subsection{Definitions}

Throughout this paper all geometries will be
$d$-dimensional,\footnote{In general, it makes sense to discuss
  geometries in 1-dimension. However, the definition we use in this
  paper may not be appropriate when $d=1$ because in one-dimension the
  collinearity relation is just the universal relation (\ie any three
  points are collinear) and betweenness gives less structure on a line
  than an ordering.} for some fixed integer $d \geq 2$, and defined
over some field or ordered field $\field$ with universe $F$. The
points of coordinate space $F^d$ will be the points of our geometries.

When $\field=\la \F,+,\cdot,0,1\ra$ is a field, the ternary relation
$\Col$ of \emph{collinearity} on $\F^d$ is defined 
for all $\vec{p},\vec{q},\vec{r}\in\F^d$ by
\[
\Col(\vec{p},\vec{q},\vec{r}\,)\ \defiff\ 
\vec{q}=\vec{p}+\lambda(\vec{r}-\vec{p}\,)\ \text{ for some }\ \lambda\in\F,\ \text{ or }\ \vec{r}=\vec{p} .
\]
We call model $\la \F^d,\Col\ra$ the $d$-dimensional \emph{affine geometry} over field $\field$. 
Our notion of affine geometry is a natural generalization of the notion affine Cartesian space of \cite[Def.1.5.]{SzT79} to fields which are not ordered.

Now suppose $\field=\la \F,+,\cdot,0,1,\leq\ra$ is an ordered field.%
\footnote{That
  $\la\F,+,\cdot,0,1,\leq\ra$ is an ordered field means that 
$\la\F,+,\cdot,0,1\ra$ is
  a field which is totally ordered by $\leq$, and we have the following
  two properties for all ${x},{y},{z}\in \F$: (i) ${x}+{z}\leq 
{y}+{z}$ if 
${x}\leq {y}$,
  and (ii) $0\leq {x}{y}$ if $0\leq {x}$ and $0\leq {y}$.} 
Then the ternary relation $\Bw$ of \emph{betweenness} on $\F^d$ is defined 
for every $\vec{p},\vec{q},\vec{r}\in\F^d$ by
\[
\Bw(\vec{p},\vec{q},\vec{r}\,)\defiff 
\vec{q}=\vec{p}+\lambda(\vec{r}-\vec{p}\,)\ \text{ for some }\ \lambda\in [0,1] \subseteq \F.
\]
Of course, $\Col$ is also defined for ordered fields and is definable
in terms of $\Bw$ by:
\[
\Col(\vec{p},\vec{q},\vec{r}\,)\ \Longleftrightarrow\
\Bw(\vec{p},\vec{q},\vec{r}\,)\vee
\Bw(\vec{q},\vec{p},\vec{r}\,)\vee
\Bw(\vec{p},\vec{r},\vec{q}\,).
\]
We call model $\la \F^d,\Bw\ra$ the $d$-dimensional \emph{ordered
affine geometry} over ordered field $\field$. Our notion of ordered
affine geometry coincides with the notion of affine Cartesian space
given in \cite[Def.1.5.]{SzT79}.

\bigskip 
In much of what follows, we will need to re-express $n$-tuples over $M^d$ ($n$-tuples of ($d$-tuples over $M$)) as $(dn)$-tuples over $M$. Given the potential confusion arising from treating the same collection of values and/or variables using tuples of different lengths, we will adopt the notational convention of denoting points in $M^d$ ($d$-tuples over $M$) using arrows and angle-brackets; for example, $\vec{a} = \dtuple{a_1,\ldots,a_d}$. Other tuples will be written out in the usual way using parentheses, and overlines (the context will always be clear). Given any $d$-tuple $\vec{p}$, we denote it's $i$'th component by $p_i$, and note that $\vec{p} = \la p_1, \dots, p_d \ra$.

Given any $n$-tuple $\bar p$ over $M^d$, we write $\flatten(\bar{p})$
to denote the naturally corresponding `flattened' $(dn)$-tuple over
$M$, \ie given any $n$-tuple $\bar{p}=(\vec{p}_1, \dots, \vec{p}_n)$
of $d$-tuples $\vec{p}_1 = \dtuple{ p_{11}, \ldots, p_{1d} }$, \dots ,
$\vec{p}_n = \dtuple{ p_{n1}, \ldots, p_{nd} }$ in $M^d$, we define
\[
\flatten(\bar p) 
 \defeq 
\left( p_{11},\ldots, p_{1d}, \dots, p_{n1},\ldots, p_{nd} \right) .
\]

For convenience, we will generally write $(\hat{p}_1, \dots, \hat{p}_n)$ for $\flatten(\bar{p})$.
For example, in the case that $d = 2$, $\vec{p}_1 = \dtuple{1,2}$, $\vec{p}_2 = \dtuple{3,4}$ and $\vec{p}_3 = \dtuple{5,6}$, we have $(\hat{p}_1,\hat{p}_2) = (1,2,3,4)$ and $(\hat{p}_1, \dots, \hat{p}_3) = (1,2,3,4,5,6)$.  The ``hatted'' notation $\hat{p}$ always indicates that the underlying tuple is a $d$-tuple, namely $\vec{p}$.
 We use the analogous notation for sequences of variables.

Since $\flatten$ is a bijection from $(M^d)^n$ onto $M^{dn}$ it has an inverse (we call it $\unflatten$).
If $\rel$ is any $n$-ary relation on $M^d$, we write $\widehat{\rel}$ to denote the corresponding $(dn)$-ary relation 
on $M$, \ie for all $\bar p\in M^{dn}$,
\[
		\bar p\in\widehat{\rel} \quad\defiff\quad \unflatten(\bar p)\in\rel.
\]
Given any $\vec{p}_1, \dots, \vec{p}_n$ in $M^d$ ($n \in \natnumb$),
the bijective nature of $\flatten$ and $\unflatten$ means this can
also be written as
\begin{equation}\label{eq-flatten}
		(\hat{p}_1, \dots, \hat{p}_n)\in\widehat{\rel} \quad\iff\quad (\vec{p}_1,\dots, \vec{p}_n)\in\rel.
\end{equation}

\medskip
We say that $n$-ary relation $\rel\subseteq \left(M^d\right)^n$ is
\emph{definable over $\Model$} if{}f the corresponding relation
$\widehat{\rel}\subseteq M^{dn}$ is definable in
$\Model$. In particular, $\Col$ is definable
over $\field$ if $\field$ is a field, and $\Bw$ is definable over
$\field$ if $\field$ is an ordered field.

\begin{definition}[coordinate geometry over a field]
\label{defn:coordinatized-over-field}
We call a model $\geometry$ a ($d$-dimensional) \emph{coordinate geometry over
a field $\field=\la\F,+,\cdot, 0, 1 \ra$} if{}f the following conditions are
satisfied:
\begin{itemize}
\item the universe of $\geometry$ is $\F^d$ (called the set of
  points);
\item $\geometry$ contains no functions or 
constants;
\item the ternary relation $\Col$ of collinearity on points 
is definable in $\geometry$.
\end{itemize}
\end{definition}

\begin{definition}[coordinate geometry over an ordered field]
\label{defn:coordinatized-over-ordered-field}
We call a model $\geometry$ a ($d$-dimensional) \emph{coordinate
{geometry} over an ordered field $\field=\langle \F, +, \cdot,0,1,\leq
\rangle$} if{}f the following conditions are satisfied:
\begin{itemize}
\item the universe of $\geometry$ is $\F^d$ (the set of points);
\item $\geometry$ contains no functions or constants;
\item
the ternary relation $\Bw$ of betweenness on points is definable in
$\geometry$.
\end{itemize}
\end{definition}

\begin{definition}[field-definable and FFD coordinate geometries]
Let $\geometry$ be a coordinate geometry over some field or ordered
field $\field$. We call $\geometry$ \emph{field-definable} if{}f all
the relations of $\geometry$ are definable over $\field$; and
\emph{finitely field-definable (FFD)} if{}f it is field-definable and
it contains only finitely many relations.
\end{definition}

\section{Some examples}

Since relations $\Col$ and $\Bw$ are definable over the corresponding
(ordered) field $\field$, both affine and ordered affine geometries
are FFD coordinate geometries for all $d\ge 2$.

Euclidean geometry can be introduced as the following FFD coordinate
geometry over any ordered field $\field$:
  \begin{equation*}\label{def:eucl}
    \euclg  \ \defeq \ \la \F^d,\cong,\Bw\ra, 
  \end{equation*}
  where the 4-nary relation $\cong$ of congruence is defined by: 
  \begin{equation*}
    (\vec{p},\vec{q}\,)\,\cng{}\,(\vec{r},\vec{s}\,) \defiff  
(p_1-q_1)^2+\dots +(p_d-q_d)^d=(r_1-s_1)^2+\dots +(r_d-s_d)^2.
  \end{equation*}

The language of $\euclg$ is exactly the language used by Tarski to
axiomatize elementary Euclidean geometry, see \eg
\cite{Tarski59,SzT79,TG98}.

Notable classical geometries such as Minkowski spacetime ($\minkst$),
Galilean spacetime ($\galst$) and Newtonian spacetime ($\newtst$), as
examples for FFD coordinate geometries, can be introduced in a similar
manner, see \cite{part2}. Here, we introduce a seemingly simpler
geometry $\relst$ for relativistic spacetime, which turns out to be
definitionally equivalent to $\minkst$ (this equivalence will be shown
in \cite{part2}). We define
  \begin{equation*}
    \relst  \ \defeq \ \la \F^d, \llcon,\Bw\ra,
  \end{equation*}
  where the binary relation $\llcon$ of lightlike relatedness\footnote{Relation $\llcon$ captures whether a light signal
    can or cannot be sent between two points of spacetime. This is a
    natural basic concept to axiomatize relativity theory; it appears
    directly in \cite{Pambuccian07, Mundy} and in spirit in the
    language of James Ax's Signaling theory, see \cite{Comparing,
      Ax}. } is defined by:
  \begin{equation*}
    \vec{p}\,\llcon\,\vec{q} \ \defiff\  (p_1-q_1)^2=(p_2-q_2)^2+\ldots+(p_d-q_d)^2.
  \end{equation*}

For cardinality reasons, it is clear that there have to be coordinate
geometries which are not field-definable. A concrete example of such a
geometry over the ordered field of reals is the geometry:
  \begin{equation*}
    \la \mathbb{R}^d, \mathbb{Z}^d,\Bw\ra.
  \end{equation*}
This is because the unary relation $\mathbb{Z}^d$ on points is not
definable over the field of reals; this fact follows easily from
Tarski's theorem on quantifier elimination of real-closed fields
\cite{Tarski98}.

\section{On automorphisms of geometries}
\label{sec:geometry-automorphisms}

Echoing Klein's Erlangen program, many of the results in this paper
rely on a correspondence between the concepts of a model and those of
its natural symmetries that respect specific structural
relationships.

\subsection{Key theorems}
In this section, we state our key theorems: they confirm
that the concepts of a geometry are intimately associated with its
affine automorphism group.

\begin{definition}[Affine automorphisms]
\label{defn:affaut}
Suppose $F$ is the universe of some field or ordered field $\field$. 
A map $A\colon\F^d\rightarrow\F^d$ is called an 
\emph{affine transformation} if{}f
it is the composition of a linear bijection 
and a translation,
\ie $A=\tau\circ L$ for some translation $\tau$
and linear bijection $L$. We note that all affine
transformations are bijections.%
\footnote{
	As usual, a map $L\colon\F^d\rightarrow\F^d$ is called a \emph{linear transformation}
	if{}f
	for every $\vec{p},\vec{q}\in\F^d$ and $a\in\F$,
	$
	L(\vec{p}+\vec{q}\,)=L(\vec{p}\,)+L(\vec{q}\,)$ and 
	$L(a\vec{p}\,)=a L(\vec{p}\,)$; $L$ is
	a \emph{linear bijection} if{}f it is an invertible linear
	transformation;
	 and a map $\tau\colon\F^d\rightarrow\F^d$ is 
	a \emph{translation}
	if{}f there is $\vec{q}\in\F^d$ such that $\tau(\vec{p}\,)=\vec{p}+\vec{q}$
	for every $\vec{p}\in\F^d$.
}  

The set of affine transformations is
denoted by $\AffTr$.
\\
\indent
Affine transformations that are automorphisms of a coordinate
geometry $\geometry$ over $\field$ are called
\emph{affine automorphisms} of $\geometry$, and the set of these is denoted by
$\AffAut{\geometry}$. Thus
\begin{equation}\label{eq-def:AffAut}
\AffAut{\geometry} \defeq \aut{\geometry}\cap\AffTr.
\end{equation}
If $\geometry$ is an affine or ordered affine geometry, we have $\AffAut{\geometry} = \AffTr$.
\end{definition}

\begin{restatable}{theorem}{THMone}
\label{THM1}
Let $\field$ be an ordered field or a field that has more than two
elements, let $\geometry$ be an FFD coordinate geometry over $\field$,
and let $\rel$ be a relation on points of $\F^d$. Then the following
statements are equivalent.
\begin{itemize} 
\item[(i)] $\rel$ is a concept of $\geometry$ (\ie
$\rel$ is definable in $\geometry$).
\item[(ii)] $\rel$ is definable over $\field$ and is closed under automorphisms
of $\geometry$.
\item[(iii)] $\rel$ is definable over $\field$ and is closed under affine
automorphisms of $\geometry$. 
\end{itemize}
\end{restatable}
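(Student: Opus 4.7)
The implications (i) $\Rightarrow$ (ii) $\Rightarrow$ (iii) are routine. For (i) $\Rightarrow$ (ii), the FFD hypothesis permits translating any $\geometry$-formula into a field-formula by replacing each point variable with a $d$-tuple of field variables and each relation symbol of $\geometry$ by the field-formula that defines its flattened version; closure under $\aut{\geometry}$ holds automatically for any first-order definable relation. The step (ii) $\Rightarrow$ (iii) is immediate from $\AffAut{\geometry} \subseteq \aut{\geometry}$.

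The main content lies in (iii) $\Rightarrow$ (i). The driving idea is a \emph{parameterized coordinate translation}. Because $\Col$ (or $\Bw$) is definable in $\geometry$ and either $|F|>2$ or $\field$ is ordered, classical Hilbert-style coordinatization can be carried out uniformly in a parameter $\bar{c} = (\vec{o}, \vec{e}_1, \ldots, \vec{e}_d)$ ranging over affine frames: using only $\Col$ one recovers the field operations on the $i$th ``axis'' of $\bar{c}$, so that any field-formula $\psi(v_1, \ldots, v_{dm})$ can be translated into a $\geometry$-formula $\psi_{\bar{c}}(\vec{q}_1, \ldots, \vec{q}_m)$ asserting that the $\bar{c}$-coordinates $A_{\bar{c}}^{-1}(\vec{q}_j)$ of the points $\vec{q}_j$ satisfy $\psi$ in $\field$, where $A_{\bar{c}}$ denotes the unique affine transformation sending the standard frame $\bar{c}_0 = (\vec{0}, \vec{e}_1, \ldots, \vec{e}_d)$ to $\bar{c}$.

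Applying this translation to the formula $\varphi$ defining $\widehat{\rel}$ over $\field$ yields a $\geometry$-formula $\varphi_{\bar{c}}$ satisfying
\begin{equation*}
\geometry \models \varphi_{\bar{c}}[\vec{p}_1, \ldots, \vec{p}_n] \iff (A_{\bar{c}}^{-1}(\vec{p}_1), \ldots, A_{\bar{c}}^{-1}(\vec{p}_n)) \in \rel.
\end{equation*}
Applying the same translation to each field-formula $\psi_i$ defining the flattened version of a relation $\rel_i$ of $\geometry$ (which exists by FFD), one can form the $\geometry$-formula
\begin{equation*}
\Phi(\bar{c}) \;:=\; (\bar{c}\text{ is an affine frame}) \,\wedge\, \bigwedge_{i=1}^{k} \forall \vec{q}_1 \cdots \vec{q}_{n_i} \bigl[\rel_i(\vec{q}_1, \ldots, \vec{q}_{n_i}) \iff \psi_{i,\bar{c}}(\vec{q}_1, \ldots, \vec{q}_{n_i})\bigr],
\end{equation*}
which expresses exactly that $A_{\bar{c}} \in \AffAut{\geometry}$; the finiteness of this conjunction is where the FFD hypothesis is crucial.

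The $\geometry$-formula $\exists \bar{c}.\, \Phi(\bar{c}) \wedge \varphi_{\bar{c}}(\vec{p}_1, \ldots, \vec{p}_n)$ then defines $\rel$. If $\rel(\vec{p}_1, \ldots, \vec{p}_n)$, the standard frame $\bar{c}_0$ witnesses the existential because $A_{\bar{c}_0} = \Id \in \AffAut{\geometry}$. Conversely, any witnessing $\bar{c}$ gives $A_{\bar{c}} \in \AffAut{\geometry}$ via $\Phi$, and closure of $\rel$ under $A_{\bar{c}}$ transfers $(A_{\bar{c}}^{-1}(\vec{p}_1), \ldots, A_{\bar{c}}^{-1}(\vec{p}_n)) \in \rel$ back to $(\vec{p}_1, \ldots, \vec{p}_n) \in \rel$. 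The chief technical obstacle is implementing the uniform coordinate translation $\psi \mapsto \psi_{\bar{c}}$ as a single $\geometry$-formula with the frame treated as a parameter; this is exactly where the hypotheses that $|F|>2$ or $\field$ is ordered (so affine frames and unambiguous field operations are recoverable from $\Col$ or $\Bw$) and FFD (so $\Phi$ is a finite conjunction) are both essential.
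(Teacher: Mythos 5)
Your argument is correct in outline but follows a genuinely different route from the paper's for the crucial implication (iii) $\Rightarrow$ (i). The paper does not construct a defining formula at all: it invokes Svenonius' theorem in an ultrapower formulation (its Theorem~\ref{simon-tetel}), reduces definability of $\rel$ to showing that every automorphism of every ultrapower of $\geometry_{\RelSet}(\field)$ respects $\rel^{\ultrafilter}$, and handles this by (a) encoding ``every affine automorphism of $\geometry_{\RelSet}(\field)$ respects $R$'' as a single first-order field sentence $\theta_{\RelSet}\rightarrow\theta_R$ that transfers to $\field^{\ultrafilter}$ by {\L}os, (b) identifying $(\geometry_{\RelSet}(\field))^{\ultrafilter}$ with $\geometry_{\RelSet}(\field^{\ultrafilter})$, and (c) using the Fundamental Theorem of Affine Geometry to decompose an arbitrary automorphism as an affine automorphism composed with a map induced by a field automorphism. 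Your route instead builds an explicit defining formula by interpreting $\field$ inside $\geometry$ with an affine frame as parameter and then existentially quantifying over frames whose associated affine map lies in $\AffAut{\geometry}$; the orbit argument at the end is sound, and your $\Phi(\bar{c})$ does capture $A_{\bar{c}}\in\AffAut{\geometry}$. What your approach buys is constructivity (an actual formula) and the avoidance of ultrapowers, Keisler--Shelah/Svenonius, and the Fundamental Theorem of Affine Geometry; what it costs is that the entire weight now rests on the ``chief technical obstacle'' you name but do not discharge: the uniform, frame-parameterized von Staudt/Hilbert coordinatization (definability from $\Col$ or $\Bw$ alone of affine frames, of the frame-relative coordinate maps, and of field addition, multiplication, and order on an axis, for all $d\ge 2$ and all fields with more than two elements, including the degenerate configurations). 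This is classical but substantial, and it is exactly the material the paper deliberately routes around by citing Berger and Tarrida for the Fundamental Theorem; to make your proof complete you would need either to carry out that interpretation or to cite it precisely. Note also that both proofs use the FFD hypothesis in the same way --- to form a finite conjunction over the primitive relations ($\theta_{\RelSet}$ in the paper, $\Phi$ in yours) --- and your direction (i) $\Rightarrow$ (ii) $\Rightarrow$ (iii) coincides with the paper's.
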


Proving this theorem requires that we first develop a number of
supporting lemmas, and completing the proof is consequently relegated
to Section \ref{THM1-proof} (p.\pageref{THM1-proof}, below).  But
first, we state and prove various corollaries of Theorem~\ref{THM1},
showing that the subset-inclusion poset of the concept-sets associated
with FFD coordinate geometries is dually isomorphic to the
subgroup-inclusion poset of their respective automorphism groups.

We will apply these results in Part 2 of this study \cite{part2} to
compare the sets of concepts of several historically significant
spacetimes by determining their affine automorphism groups.

\begin{remark}
Let $\Model$ and $\Nodel$ be two models. Clearly,
\[
\ca{\Model}=\ca{\Nodel}\ \Longrightarrow\ \aut{\Model}=\aut{\Nodel},
\]
because definable relations must be closed under automorphisms.
Furthermore, if the universes of $\Model$ and $\Nodel$ are the same, then
\[
\ca{\Model}\subseteq\ca{\Nodel}\ \Longrightarrow\ 
\aut{\Model}\supseteq\aut{\Nodel}.
\]

The reverse implication ($\Longleftarrow$) does not hold in general.
For example, if $\Rodel$ is the field of reals and $\Rodel'$ is the
expansion of $\Rodel$ with the unary relation $N$ of being a natural
number, then $\aut{\Rodel} = \aut{\Rodel'}$ because both $\Rodel$ and
$\Rodel'$ have no nontrivial automorphisms, but
$\ca{\Rodel'}\not\subseteq\ca{\Rodel}$ because the set of natural
numbers is not definable in the field of real numbers.  Nonetheless,
this reverse implication \emph{does} hold for FFD coordinate
geometries -- as we now show.
\end{remark}

\begin{theorem}
\label{thm-erlangen}
Assume that $\field$ is an ordered field or a field with more than two elements, 
and that $\geometry$ and $\geometry'$ are
FFD coordinate geometries over
$\field$. Then:
\begin{enumerate}[(i)]
\item
$\ca{\geometry}\subseteq\ca{\geometry'}\ \Longleftrightarrow\
\aut{\geometry}\supseteq\aut{\geometry'}$.
\item
\label{Er2}
$\ca{\geometry}\subseteq\ca{\geometry'}\ \Longleftrightarrow\
 \AffAut{\geometry}\supseteq
\AffAut{\geometry'}$. 
\end{enumerate}
\end{theorem}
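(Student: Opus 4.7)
The plan is to reduce both equivalences to Theorem~\ref{THM1}, which does the substantive work. The two forward directions ($\Rightarrow$) are essentially the observation from the remark preceding the theorem. Assume $\ca{\geometry}\subseteq\ca{\geometry'}$, and let $R$ be any primitive relation of $\geometry$. Then $R$ is trivially a concept of $\geometry$, hence by hypothesis a concept of $\geometry'$, so $R$ is closed under every $f\in\aut{\geometry'}$. Such an $f$ is a bijection on the shared universe $\F^d$ which respects every primitive relation of $\geometry$, so $f\in\aut{\geometry}$. This shows $\aut{\geometry'}\subseteq\aut{\geometry}$, giving the forward direction of (i); intersecting both sides with $\AffTr$ gives the forward direction of (ii).

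For the reverse direction of (i), suppose $\aut{\geometry}\supseteq\aut{\geometry'}$ and let $\rel\in\ca{\geometry}$. By Theorem~\ref{THM1} applied to $\geometry$ (the implication (i)$\Rightarrow$(ii)), $\rel$ is definable over $\field$ and closed under every element of $\aut{\geometry}$, and so \emph{a fortiori} closed under every element of $\aut{\geometry'}$. Applying Theorem~\ref{THM1} to $\geometry'$ (the implication (ii)$\Rightarrow$(i)) then yields $\rel\in\ca{\geometry'}$, as required. The reverse direction of (ii) is verbatim the same argument, using the equivalence (i)$\iff$(iii) of Theorem~\ref{THM1} with $\AffAut$ in place of $\aut$ throughout; note that the assumption $\AffAut{\geometry}\supseteq\AffAut{\geometry'}$ is exactly what is needed to transport closure under affine automorphisms from $\geometry$ to $\geometry'$.

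There is no real obstacle here beyond verifying that the notions of ``relation on points'' and ``closure under (affine) automorphisms'' have a uniform meaning across $\geometry$ and $\geometry'$, which is immediate because both geometries share the universe $\F^d$ and the ambient linear/affine structure defined from $\field$. The hypothesis on $\field$ (ordered, or having more than two elements) is inherited solely from the hypotheses of Theorem~\ref{THM1}; the FFD assumption on both geometries is used precisely to invoke that theorem for each of them in turn. Thus, once Theorem~\ref{THM1} is in hand, Theorem~\ref{thm-erlangen} is a direct consequence.
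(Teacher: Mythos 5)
Your proof is correct and takes essentially the same route as the paper: the forward directions come from the elementary fact that definable relations are respected by automorphisms on the shared universe (intersected with $\AffTr$ for (ii)), and the reverse directions are exactly the paper's two-fold application of Theorem~\ref{THM1}, transporting ``definable over $\field$ and closed under (affine) automorphisms'' from $\geometry$ to $\geometry'$. No gaps.
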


\begin{proof}
Note that $\geometry$ and $\geometry'$ have the same universe. So, as
we have already seen, the $\Longrightarrow$ implications of both items
hold.  Thus we have to prove
only the $\Longleftarrow$ directions.

(i): Assume that $\aut{\geometry}\supseteq\aut{\geometry'}$ and that
$\bm{\upvarrho}\in\ca{\geometry}$.  Then, by Theorem~\ref{THM1},
$\bm{\upvarrho}$ is definable over $\field$ and is closed under
automorphisms of $\geometry$. Then $\bm{\upvarrho}$ is closed under
automorphisms of $\geometry'$ since
$\aut{\geometry}\supseteq\aut{\geometry'}$. And so, by
Theorem~\ref{THM1} again, $\bm{\upvarrho}$ is definable in
$\geometry'$, \ie $\bm{\upvarrho}\in\ca{\geometry'}$.

(ii): The proof mirrors that of (i); the only
difference is that here one has to refer to affine automorphisms
instead of automorphisms.
\end{proof}

\begin{corollary}
\label{cor-erlangen}
Assume that $\field$ is an ordered field or a field that has more than two elements, and that $\geometry$ and $\geometry'$ are FFD coordinate geometries over
$\field$. Then:
\begin{enumerate}[(i)]
\item  $\ca{\geometry}=\ca{\geometry'}\ \Longleftrightarrow\
\aut{\geometry}=\aut{\geometry'}$.
\item 
\label{ErCor2}
$\ca{\geometry}=\ca{\geometry'}\ \Longleftrightarrow\
\AffAut{\geometry}=\AffAut{\geometry'}$.
\item  $\ca{\geometry}\subsetneq\ca{\geometry'}\ \Longleftrightarrow\
\aut{\geometry}\supsetneq\aut{\geometry'}$.
\item $\ca{\geometry}\subsetneq\ca{\geometry'}\ \Longleftrightarrow\
\AffAut{\geometry}\supsetneq\AffAut{\geometry'}$. \hfill $\Box$
\end{enumerate}
\end{corollary}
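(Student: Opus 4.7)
The plan is to derive all four items directly from Theorem~\ref{thm-erlangen}, using only the trivial set-theoretic facts that $A=B$ is the conjunction of $A\subseteq B$ and $B\subseteq A$, and that $A\subsetneq B$ is the conjunction of $A\subseteq B$ and $A\neq B$. Since $\geometry$ and $\geometry'$ are both coordinate geometries over the same $\field$, they share the universe $\F^d$, so there is no issue about the ambient set.

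For items (i) and (ii), I would rewrite $\ca{\geometry}=\ca{\geometry'}$ as the conjunction $\ca{\geometry}\subseteq\ca{\geometry'}$ and $\ca{\geometry'}\subseteq\ca{\geometry}$. Applying Theorem~\ref{thm-erlangen}(i) to each of these two containments converts them to $\aut{\geometry}\supseteq\aut{\geometry'}$ and $\aut{\geometry}\subseteq\aut{\geometry'}$, whose conjunction is precisely $\aut{\geometry}=\aut{\geometry'}$. Item (ii) is then identical except that Theorem~\ref{thm-erlangen}(\ref{Er2}) is invoked in place of Theorem~\ref{thm-erlangen}(i), yielding the affine version.

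For items (iii) and (iv), I would argue as follows in the $\Longrightarrow$ direction: from $\ca{\geometry}\subsetneq\ca{\geometry'}$ we get in particular $\ca{\geometry}\subseteq\ca{\geometry'}$, so by Theorem~\ref{thm-erlangen}(i) we obtain $\aut{\geometry}\supseteq\aut{\geometry'}$; equality cannot hold here, because by item~(i) of the corollary (already established) that would force $\ca{\geometry}=\ca{\geometry'}$, contradicting properness. Hence $\aut{\geometry}\supsetneq\aut{\geometry'}$. The reverse direction is entirely symmetric: starting from $\aut{\geometry}\supsetneq\aut{\geometry'}$, Theorem~\ref{thm-erlangen}(i) gives $\ca{\geometry}\subseteq\ca{\geometry'}$, and item~(i) rules out equality. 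Item (iv) is proved identically, substituting $\AffAut{}$ for $\aut{}$ and invoking Theorem~\ref{thm-erlangen}(\ref{Er2}) and item~(ii) of the corollary throughout.

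There is no real obstacle here; the entire content is a purely formal consequence of Theorem~\ref{thm-erlangen} once one notes that equality and proper inclusion of sets decompose into pairs of inclusions. The only thing worth checking carefully is that items (i) and (ii) are proved \emph{before} appealing to them in the proofs of (iii) and (iv), which is straightforward given the order listed.
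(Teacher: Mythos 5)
Your proposal is correct and matches the paper's intent: the paper omits the proof entirely (the $\Box$ after the statement signals it is an immediate consequence of Theorem~\ref{thm-erlangen}), and your derivation — decomposing equality and proper inclusion into pairs of inclusions and applying the two items of that theorem — is exactly the routine argument being left to the reader.
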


Let us note that $\la\aut{\Model},\circ\ra$ is a group for every model
$\Model$, and similarly $\la\AffAut{\geometry},\circ\ra$ is a group
for every coordinate geometry $\geometry$, where $\circ$ is the
standard composition of functions. By \eqref{subCA}, the following is
a reformulation of Theorem~\ref{thm-erlangen} in terms of cylindric
algebras.

\begin{corollary}
\label{main-corollary}
Assume that $\field$ is an ordered field or a field that has more than two elements, and that $\geometry$ and $\geometry'$ are
FFD coordinate geometries over $\field$. Then:
\begin{itemize}
\item[(i)] $\CA{\geometry} \leq
  \CA{\geometry'}\ \Longleftrightarrow\ \la\aut{\geometry},\circ\ra\geq
  \la\aut{\geometry'},\circ\ra$.
\item[(ii)] $\CA{\geometry} \leq
  \CA{\geometry'}\ \Longleftrightarrow\ \la\AffAut{\geometry},\circ\ra\geq
  \la\AffAut{\geometry'},\circ\ra$. \hfill $\Box$
\end{itemize} 
\end{corollary}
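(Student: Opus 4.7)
The plan is to derive this corollary as a direct translation of Theorem~\ref{thm-erlangen} into the language of cylindric-relativized set algebras, exploiting the bridge \eqref{subCA} already established in Section~2. The statement is essentially a repackaging of the earlier result, so no new geometric or model-theoretic content is required; everything reduces to bookkeeping.

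First, I would observe that since $\geometry$ and $\geometry'$ are both coordinate geometries over the \emph{same} field $\field$, their universes coincide by definition: both equal $F^d$. This is the crucial observation that allows us to apply \eqref{subCA} without the usual caveat. Indeed, with the universes known to be equal, \eqref{subCA} collapses to the cleaner equivalence
\[
\CA{\geometry}\leq\CA{\geometry'} \ \Longleftrightarrow \ \ca{\geometry}\subseteq\ca{\geometry'}.
\]

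Second, I would chain this with Theorem~\ref{thm-erlangen}. For part (i), Theorem~\ref{thm-erlangen}(i) gives
\[
\ca{\geometry}\subseteq\ca{\geometry'} \ \Longleftrightarrow\ \aut{\geometry}\supseteq\aut{\geometry'},
\]
and combining with the display above yields the desired equivalence, provided we confirm that the \emph{set-theoretic} containment $\aut{\geometry}\supseteq\aut{\geometry'}$ coincides with the \emph{subgroup} ordering $\la\aut{\geometry},\circ\ra\geq\la\aut{\geometry'},\circ\ra$. This is immediate: both are groups of bijections of the common universe $F^d$ under ordinary composition, so containment of underlying sets is exactly the subgroup relation. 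Part (ii) follows identically, using Theorem~\ref{thm-erlangen}(\ref{Er2}) in place of Theorem~\ref{thm-erlangen}(i), together with the fact that $\AffAut{\geometry}$ and $\AffAut{\geometry'}$ are likewise groups under composition (being intersections of the group $\AffTr$ with the automorphism groups of the two geometries).

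There is really no main obstacle here; the only thing worth noting carefully is the matching of universes at the outset, since without that observation \eqref{subCA} would not yield the clean biconditional needed to convert $\CA{\geometry}\leq\CA{\geometry'}$ into $\ca{\geometry}\subseteq\ca{\geometry'}$. Once this is in place, the corollary is a one-line consequence of Theorem~\ref{thm-erlangen}, as the paper indicates.
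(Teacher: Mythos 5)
Your proposal is correct and follows exactly the route the paper intends: it invokes \eqref{subCA} together with the observation that $\geometry$ and $\geometry'$ share the universe $F^d$, and then chains with Theorem~\ref{thm-erlangen}; the paper itself presents the corollary as precisely this reformulation and leaves the proof to the reader. Your extra remark that set-theoretic containment of the automorphism sets agrees with the subgroup ordering under $\circ$ is a harmless and accurate piece of bookkeeping that the paper also notes just before the statement.
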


As observed above, proving (and fully explaining) Theorem~\ref{THM1}
requires that we first consider a number of preliminary results; the
full proof concludes in Section~\ref{THM1-proof}.

\subsection{Automorphisms and affine automorphisms}
\label{autonotations}

Suppose $F$ is the universe of some field or ordered field $\field$. 
Given a function $f\colon\F\rightarrow\F$, we write
$\widetilde{f}\colon\F^d\rightarrow\F^d$ for the map induced on $F^d$ by $f$ componentwise, i.e.\
 $\widetilde{f}\bigl(\laa p_1,\ldots,p_d\raa\bigr)\defeq\laa 
f(p_1),\ldots,f(p_d)\raa$.
We note that if $f$ is a bijection, then so is $\widetilde{f}$,
and we define the induced automorphisms on $F^d$ to be the members of the set
\[
\autind{\field}\defeq \{\widetilde{\alpha} :
\alpha\in\aut{\field}\}.
\]

\begin{lemma}[Fundamental Theorem of Affine Geometry] 
\label{fundamental-thm}
${}$
\begin{itemize}
\item[(i)] Let $\field=\la \F,+,\cdot,0,1\ra$ be a field that has more than two elements. 
Then
\[
\aut{\la \F^d,\Col\ra}= \AffTr\circ\autind{\field}.
\]
\item[(ii)] Let $\field=\la \F,+,\cdot,0,1,\leq\ra$ be an ordered field.
Then
\[
\aut{\la \F^d,\Bw\ra}= \AffTr\circ\autind{\field}.
\] 
\end{itemize}
\end{lemma}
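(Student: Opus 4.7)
The plan is to prove the two inclusions separately. The easy direction $\AffTr\circ\autind{\field}\subseteq \aut{\la \F^d,\Col\ra}$ is a direct check. Every affine transformation $A=\tau\circ L$ sends the line $\{\vec{p}+\lambda(\vec{r}-\vec{p}\,):\lambda\in\F\}$ onto another such line, so it respects $\Col$. For $\widetilde{\alpha}$ with $\alpha\in\aut{\field}$, the key identity
\[
\widetilde{\alpha}\bigl(\vec{p}+\lambda(\vec{r}-\vec{p}\,)\bigr)=\widetilde{\alpha}(\vec{p}\,)+\alpha(\lambda)\bigl(\widetilde{\alpha}(\vec{r}\,)-\widetilde{\alpha}(\vec{p}\,)\bigr)
\]
shows the defining equation of $\Col$ is transported to itself (with $\alpha(\lambda)$ in place of $\lambda$), and composition of $\Col$-preserving bijections is $\Col$-preserving. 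For part (ii), the same identity plus the fact that an automorphism of an ordered field preserves $[0,1]$ shows that $\AffTr\circ\autind{\field}\subseteq\aut{\la\F^d,\Bw\ra}$.

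For the hard direction $\aut{\la\F^d,\Col\ra}\subseteq\AffTr\circ\autind{\field}$, I would follow the classical Fundamental Theorem of Affine Geometry recipe. Let $\varphi\in\aut{\la\F^d,\Col\ra}$. By precomposing with the translation sending $\vec{0}$ to $-\varphi(\vec{0}\,)$, reduce to the case $\varphi(\vec{0}\,)=\vec{0}$, so it suffices to exhibit a linear bijection $L$ and a field automorphism $\alpha$ with $\varphi=L\circ\widetilde{\alpha}$. Next, I would observe that parallelism of two distinct lines is expressible from $\Col$ alone when $d\ge 2$ (two lines are parallel iff they are coplanar and disjoint, and both predicates are first-order definable from $\Col$), so $\varphi$ sends parallel lines to parallel lines. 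The parallelogram rule then gives additivity: $\varphi(\vec{p}+\vec{q}\,)=\varphi(\vec{p}\,)+\varphi(\vec{q}\,)$. Fixing a basis $\vec{e}_1,\ldots,\vec{e}_d$, for each $\lambda\in\F$ there is a unique $\alpha(\lambda)\in\F$ with $\varphi(\lambda\vec{e}_1)=\alpha(\lambda)\varphi(\vec{e}_1)$; using the Desarguesian configurations that collinearity can detect, one verifies that $\alpha\colon\F\to\F$ is a field automorphism and is independent of the chosen basis vector. Defining $L$ as the linear bijection with $L(\vec{e}_i)=\varphi(\vec{e}_i)$, one gets $\varphi=L\circ\widetilde{\alpha}$. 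The exclusion $|\F|>2$ is essential here: for $|\F|=2$, the relation $\Col(\vec{p},\vec{q},\vec{r}\,)$ reduces to ``two of the three points coincide'' and carries no geometric information.

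Part (ii) is then a short bootstrapping step from (i). Since the excerpt already records that $\Col$ is definable from $\Bw$, every $\varphi\in\aut{\la\F^d,\Bw\ra}$ lies in $\aut{\la\F^d,\Col\ra}$, so by (i) we may write $\varphi=A\circ\widetilde{\alpha}$ with $A\in\AffTr$ and $\alpha\in\aut{\la\F,+,\cdot,0,1\ra}$. Applying $\varphi$ to an instance of $\Bw(\vec{p},\vec{q},\vec{r}\,)$ and tracing through the same identity as before, preservation of $\Bw$ forces $\alpha([0,1])\subseteq[0,1]$, which (since $\alpha$ is a field automorphism) is equivalent to $\alpha$ preserving $\leq$. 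Hence $\alpha\in\aut{\field}$ in the ordered-field sense, and $\varphi\in\AffTr\circ\autind{\field}$.

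I expect the main obstacle to be Step 3 of the hard direction, where one must upgrade additivity of $\varphi$ to a full field automorphism acting on scalars; this is the genuinely geometric content of the theorem and relies on Desargues-type incidence lemmas that are definable in $\Col$ only when $|\F|>2$. I would also take care with the low-dimensional case $d=2$, where there is no ambient $3$-space over which to quantify when expressing parallelism, so the required definability of parallelism from $\Col$ must be phrased slightly differently (e.g.\ directly via disjointness of lines).
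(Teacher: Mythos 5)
Your proposal is correct and follows essentially the same route as the paper: the easy inclusion is the same direct check, and part (ii) is the same reduction to (i) via the definability of $\Col$ from $\Bw$ together with the observation that preserving $\Bw$ forces the induced field automorphism to preserve the order. The only difference is that for the hard inclusion in (i) the paper simply cites the classical Fundamental Theorem of Affine Geometry (Berger, Tarrida), whereas you sketch that standard argument yourself; your sketch matches the cited proofs, including the role of the hypothesis $|\F|>2$.
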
 
\begin{proof}
\underline{(i)}:
It is easy to check that maps induced by automorphisms of $\field$, as well as
affine transformations, are 
automorphisms of $\la \F^d,\Col\ra$, whence so are their compositions. This shows that
$
\AffTr \; \circ \; \autind{\field} \subseteq \aut{\la \F^d,\Col\ra}.
$
For the converse (that every automorphism is a composition of a map induced by
an automorphism of $\field$ and an affine transformation) see, \eg \ Berger~\cite[Thm.2.6.3, p.52]{Berger} and
Tarrida~\cite[Thm.2.46, p.81]{Tarrida}.
\underline{(ii)}: This follows easily from (i) because a map induced by a field automorphism respects $\Bw$ if{}f it respects the order.
\end{proof}

\begin{prop}
\label{autlemma} 
Assume that $\field$ is an ordered field or a field that has more than
two elements, and that $\geometry$ is a field-definable coordinate
geometry over $\field$. Then \begin{equation*}
  \aut{\geometry}=\AffAut{\geometry}\;\circ\;\autind{\field}.
\end{equation*}
Moreover, the decomposition of an automorphism of $\geometry$
  into an affine automorphism following a map induced by an automorphism
  of $\field$ is unique.
\end{prop}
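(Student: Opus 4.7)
The plan is to combine Lemma~\ref{fundamental-thm} (the Fundamental Theorem of Affine Geometry) with the basic model-theoretic fact that a field automorphism $\alpha$ preserves every relation definable over $\field$. Because $\geometry$ is a coordinate geometry over $\field$, either $\Col$ (when $\field$ is an unordered field) or $\Bw$ (when $\field$ is ordered) is definable in $\geometry$; in either case every $\varphi \in \aut{\geometry}$ is an automorphism of the corresponding affine (or ordered affine) geometry, so Lemma~\ref{fundamental-thm} yields an initial decomposition $\varphi = A \circ \widetilde{\alpha}$ with $A \in \AffTr$ and $\alpha \in \aut{\field}$. The task then reduces to showing that each factor individually lies in $\aut{\geometry}$, and that the decomposition is unique.

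The crucial step is that $\widetilde{\alpha} \in \aut{\geometry}$. For each relation $\rel$ of $\geometry$, field-definability means the flattened relation $\widehat{\rel}$ is definable in $\field$ by some formula in the language of (ordered) fields; since $\alpha \in \aut{\field}$, it respects $\widehat{\rel}$, and via the correspondence \eqref{eq-flatten} this is precisely the statement that $\widetilde{\alpha}$ respects $\rel$. Hence $\widetilde{\alpha} \in \aut{\geometry}$, and so $A = \varphi \circ \widetilde{\alpha}^{-1} \in \aut{\geometry}$; being affine, $A$ belongs to $\AffAut{\geometry}$. For the reverse inclusion $\AffAut{\geometry} \circ \autind{\field} \subseteq \aut{\geometry}$, affine automorphisms lie in $\aut{\geometry}$ by definition, and $\autind{\field} \subseteq \aut{\geometry}$ by the same argument just given, so their compositions remain in $\aut{\geometry}$.

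For uniqueness, suppose $A_1 \circ \widetilde{\alpha}_1 = A_2 \circ \widetilde{\alpha}_2$. Then $A_2^{-1} \circ A_1 = \widetilde{\beta}$, where $\beta := \alpha_2 \circ \alpha_1^{-1} \in \aut{\field}$, so $\widetilde{\beta}$ is simultaneously affine and induced by a field automorphism. Since a map induced componentwise by a field automorphism fixes the origin, the affine map $\widetilde{\beta}$ has trivial translation part and must therefore be linear; applying linearity to the vector $\la 1, 0, \ldots, 0\ra$ with an arbitrary scalar $a \in \F$ forces $\beta(a) = a$, so $\beta = \Id$, whence $\alpha_1 = \alpha_2$ and consequently $A_1 = A_2$. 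The only real obstacle throughout is the bookkeeping between the $dn$-tuple presentation of $\widehat{\rel}$ (on which $\alpha$ acts) and the $n$-tuple-of-points presentation of $\rel$ (on which $\widetilde{\alpha}$ acts); once \eqref{eq-flatten} is invoked, the rest of the argument is essentially mechanical.
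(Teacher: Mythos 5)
Your proof is correct and follows essentially the same route as the paper's: the Fundamental Theorem of Affine Geometry gives the decomposition, field-definability gives $\autind{\field}\subseteq\aut{\geometry}$ so that the affine factor lands in $\AffAut{\geometry}$, and uniqueness reduces to the fact that only the identity is both affine and induced by a field automorphism. Your only addition is that you spell out this last fact (origin-fixing kills the translation part, then linearity applied to $a\,\unit_1$ forces $\beta(a)=a$), which the paper simply asserts.
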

\begin{proof} 
Let us first note that 
$\autind{\field}\subseteq\aut{\geometry}$ 
since every relation of $\geometry$ is definable over $\field$.
Hence, by definition \eqref{eq-def:AffAut},
\[
    \AffAut{\geometry}\circ\autind{\field}
		\subseteq
		\aut{\geometry}\circ\autind{\field}
		\subseteq 
    \aut{\geometry}\circ\aut{\geometry} 
    = 
    \aut{\geometry}. 
\]

  The converse inclusion follows from Lemma~\ref{fundamental-thm},
  easily as follows. Since the relevant relation, $\Bw_\field$ or
  $\Col_\field$, is definable in $\geometry$ (depending on whether
  $\field$ is an ordered field or just a field), this relation is
  respected by the elements of $\aut{\geometry}$. Thus
  $\aut{\geometry}\subseteq\AffTr\circ\autind{\field}$ by
  Lemma~\ref{fundamental-thm}. Hence every $g\in\aut{\geometry}$ can
  be written as $g=A\circ \widetilde{\alpha}$ for some $A\in \AffTr$ and
  $\widetilde{\alpha}\in\autind{\field}$. Since $\autind{\field}\subseteq
  \aut{\geometry}$ and $\langle \aut{\geometry},\circ \rangle$ is a
  group, we have that $A\in\aut{\geometry}$. So $A\in
  \AffAut{\geometry}$, and hence $\aut{\geometry}\subseteq
  \AffAut{\geometry}\circ\autind{\field}$.

  To prove the uniqueness of the decomposition, let $A_1\circ
  \widetilde{\alpha_1}=A_2\circ \widetilde{\alpha_2}$ be an
  automorphism of $\geometry$ for some $A_1,A_2\in\AffAut{\geometry}$
  and $\alpha_1,\alpha_2\in\aut{\field}$. Then $A_2^{-1}\circ A_1
  =\widetilde{\alpha_2}\circ\widetilde{\alpha_1}^{-1}$, from which the
  uniqueness follows since the identity map is the only affine
  transformation that is also an induced automorphism on
  $\F^d$. 
\end{proof}

\subsection{Definability of relations on $d$-tuples in arbitrary models}
\label{relations_on_dtuples}
With geometries as our main application in mind, we now make
  general observations that are useful when investigating the
  definability of relations on $d$-tuples in arbitrary models.  
	
	Let
$\lang$ be an arbitrary first-order language and let $\formulas$ be
the set of first-order formulas in the language $\mathcal L$.  Then we define the following set of relation symbols for $\mathcal L$-definable relations of $d$-tuples:
\begin{equation*}\cand
\defeq \left\{\, \la\varrho,n\ra\in\formulas\times\natnumb\: :\: 
\text{the free variables of $\varrho$ are in }
  \{\fv{v}_1,\ldots,\fv{v}_{dn}\}\, \right\}.
\end{equation*}

Let $\Model$ be an arbitrary model for language $\mathcal{L}$ with universe $M$. For each relation symbol $R=\la
\varrho,n\ra\in\cand$, we define a corresponding $n$-ary relation $\RMd$ on
$M^d$ by
\begin{equation}\label{eq:def-Rf}
(\vec{p}_{1},\ldots, \vec{p}_{n})\in \RMd
	\quad\defiff\quad 
\Model\models \varrho [ \hat{{p}}_{1},\dots,
\hat{{p}}_{n} ] .
\end{equation}

\begin{remark}\label{rem-clearly}
  Clearly, $\RMd$ is definable over $\Model$ for every
  $R\in\cand$. Conversely, given any relation $\rel$ on $M^d$ that is
  definable over $\Model$, there is some $R\in\cand$ for which $\rel =
  \RMd$. For suppose $\varrho$ is a formula defining $\rel$ over
  $\Model$, and let $n$ be $\rel$'s arity. Taking $R = \langle
  \varrho,n\rangle$ then yields $\rel = \RMd$.  Thus, given any
  relation $\rel$ on $M^d$, it is definable over $\Model$ if{}f
  $\rel=\RMd$ for some $R\in\cand$.
\end{remark}

 For every $\RelSet\subseteq \cand$, let
$\langD$ denote the first-order language with relation symbols in
$\RelSet$ such that the arity of each $R=\laa \varrho,n\raa\in\RelSet$ is
$n$.

In particular, we define the following two formulas $\gamma$ and $\beta$, the 
first in the languages of fields and the second in that of ordered fields, and each having $3d$ 
free variables, $v_1,\ldots,v_{3d}$. Given
$\vec{x}=\la v_1,\ldots, v_d\ra$, $\vec{y}=\la v_{d+1},\ldots,v_{2d}\ra$,
 $\vec{z}=\la v_{2d+1},\ldots, v_{3d}\ra$ and $v=v_{3d+1}$, we define
\begin{align*}
\gamma(\hat{x},\hat{y},\hat{z}) & :=  \exists v\, 
\left(\vec{y}+ v\vec{x}=\vec{x}+v\vec{z}\ \vee\ 
\vec{z}=\vec{x}\,\right) \text{ and}\\  
\beta(\hat{x},\hat{y},\hat{z}) & := 
\exists v\, 
\left(\vec{y}+v\vec{x}=\vec{x}+ v\vec{z}\ \wedge\
0\leq v \leq 1\right). 
\end{align*}

\begin{remark}
\label{rem:gammaCol-betaBw}
Suppose $\lang$ is the language of $\field$. Then
\begin{itemize}
\item  
 $\laa \gamma,3\raa\in\cand$ and if $\field$ is a field, then
 $\laa \gamma,3\raa_{\field d}$  coincides with $\Col$ of the affine geometry
over $\field$.  
\item  $\laa \beta,3\raa\in\cand$ 
and, if $\field$ is an ordered field, then
 $\laa \beta,3\raa_{\field d}$ is $\Bw$
of the ordered affine geometry over $\field$.
\end{itemize}
\end{remark}

Throughout the remainder of this paper, we consider two cases in parallel: the case where $\field$ is a field, $\lang$ is the language of fields, $\gamma$ is the relevant formula and $\Col$ the relevant relation; and the case where $\field$ is an ordered field, $\lang$ is the language of ordered fields, $\beta$ is the relevant formula and $\Bw$ the relevant relation. To simplify the presentation of results below, we will write $\keyformula$ (`key formula') in place of the formulas $\gamma$ and $\beta$, \ie
\[
	\keyformula \quad =
	\begin{cases}
	    \quad \gamma  & \text{ if the language of $\field$ is the language of fields; and } \\
			\quad \beta   & \text{ if the language of $\field$ is the language of ordered fields}.
	\end{cases}
\]
We likewise define the `key relation' to be $\keyrelation = \keysymbol_{\field d}$, and note that $\keyrelation$ coincides either with $\Col$ or $\Bw$ depending on the language of $\field$ (cf.~Remark~\ref{rem:gammaCol-betaBw}). In both cases, we continue to write $F$ for the universe of $\field$ and note that the language of $\field$ is usually clear from context.
\medskip

For every $\RelSet\subseteq\cand$, we define a model
$\geometry_\RelSet(\Model)$ for language $\langD$ to be the model with universe $M^d$ in which the interpretation of every
$R\in\RelSet$ is $\RMd$, \ie 
\begin{equation}\label{eq:def-Gdelta}
\geometry_\RelSet(\Model)=\la M^d,\RMd\ra_{R\in\RelSet}.
\end{equation}

To express that two models $\Model$ and $\Nodel$
for possibly different languages
are the same, we introduce the following notation: we write
$\Model\doteq\Nodel$
if{}f their universes are the same
and there is a one-to-one correspondence
between their languages such that the corresponding relation symbols
and function symbols 
have the same interpretations in the two models.

\begin{remark}\label{rem:concrepetition}
  In some models of the form $\geometry_\RelSet(\Model)$, the same
  concept appears more than once, but named using different relation
  symbols. This is because, if $\varphi$ and $\psi$ are different but
  logically equivalent formulas, then the symbols $\la \varphi, n\ra$
  and $\la \psi, n\ra$ of $\cand$ differ but the corresponding
  relations are the same.  One extreme case is the conceptually
  richest model $\geometry_{\cand}(\Model)$, where every
  field-definable concept appears as a primitive concept infinitely
  many times.
\end{remark}

\begin{remark}\label{prop-doteq}
Suppose that $\lang$ is the language of $\field$. By Definitions
\ref{defn:coordinatized-over-field} and
\ref{defn:coordinatized-over-ordered-field}, model $\geometry$ is a
field-definable coordinate geometry over $\field$ if{}f there is some
$\RelSet\subseteq\cand$ such that
$\geometry\doteq\geometry_\RelSet(\field)$ and the key relation
$\keyrelation$ is definable in $\geometry$. Moreover, $\geometry$ is
FFD if{}f $\RelSet$ can be chosen finite.
\end{remark}

\begin{remark}\label{prop:eeq}
  For arbitrary models $\Model_1$ and $\Model_2$, and 
(a possibly
  infinite) subset $\RelSet$ of $\cand$, we have that
  $\geometry_{\RelSet}(\Model_1)$ and $\geometry_{\RelSet}(\Model_2)$
  are elementarily equivalent whenever $\Model_1$ and $\Model_2$ are
  elementarily equivalent. This is because, using the formula parts of the symbols of $\RelSet$, one can define a translation $\Tr$ from the common language of $\geometry_{\RelSet}(\Model_1)$ and $\geometry_{\RelSet}(\Model_2)$ to the common language of $\Model_1$ and $\Model_2$, and it can be shown by formula induction that 
  \begin{equation*}
      \geometry_{\RelSet}(\Model)\models \sigma \iff \Model \models \Tr(\sigma)  
  \end{equation*}
  holds for every model $\Model$ and sentence $\sigma$ of common language of $\Model_1$ and $\Model_2$.
\end{remark}

\subsection{Extending automorphisms from one geometry to another}

In this section, we establish the conditions under which affine
automorphisms of certain coordinate geometries are also automorphisms
of their finite extensions. In particular, if
$\geometry_{\RelSet}(\field)$ is a coordinate geometry for some finite
$\RelSet\subseteq\cand$, and $R\in\cand$, Lemma~\ref{affaut-lemma}
establishes conditions under which every affine automorphism of
$\geometry_{\RelSet}(\field)$ must also be an affine automorphism of
$\geometry_{\RelSet\cup\{R\}}(\field)$.

\begin{definition}
\label{unitvectors}
We write $\origin$ to denote the \emph{origin} $\laa 0, \dots, 0 \raa \in F^d$, and
$\unit_1,\ldots,\unit_d$ to denote the \emph{unit vectors} $\laa
1,0,\ldots,0\raa, \ldots, \laa 0,\ldots,0,1\raa \in F^d$, respectively.
\end{definition}

  The following simple proposition is well-known from linear algebra. We
  restate it here for convenience and to fix notation, and then re-express 
  the concepts embedded within it in a form suitable for our purposes.

\begin{prop}
\label{affinszemlelet}
  Suppose
$\vecfe{e}_0, \vecfe{e}_1, \ldots,\vecfe{e}_d\in \F^d$.  Then
there is an affine transformation that takes the origin $\origin$ to
$\vecfe{e}_{0}$ and each unit vector $\unit_j$ to the corresponding
$\vecfe{e}_j$ if{}f the vectors $(\vecfe{e}_1-\vecfe{e}_0$), $\ldots$,
$(\vecfe{e}_d-\vecfe{e}_0)$ are linearly independent.  Moreover, this
affine transformation is unique. \hfill $\Box$
\end{prop}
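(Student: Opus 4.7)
The plan is to reduce the statement to the standard fact that a linear map on $\F^d$ is uniquely determined by its values on the basis $\unit_1,\ldots,\unit_d$ and extends to a linear bijection precisely when those values form a basis. The key observation driving the reduction is that any linear map $L$ fixes $\origin$, so in the decomposition $A=\tau\circ L$ the requirement $A(\origin)=\vecfe{e}_0$ forces $\tau$ to be the translation by $\vecfe{e}_0$, namely $\tau(\vec{p})=\vec{p}+\vecfe{e}_0$. Equivalently, $L$ must be the map $\vec{p}\mapsto A(\vec{p})-\vecfe{e}_0$, and the conditions $A(\unit_j)=\vecfe{e}_j$ then translate into $L(\unit_j)=\vecfe{e}_j-\vecfe{e}_0$ for $j=1,\ldots,d$.

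For the forward implication, I would argue that if such an affine transformation $A$ exists then the induced $L$ is a linear bijection sending each $\unit_j$ to $\vecfe{e}_j-\vecfe{e}_0$; because a linear bijection carries bases to bases, the vectors $\vecfe{e}_1-\vecfe{e}_0,\ldots,\vecfe{e}_d-\vecfe{e}_0$ must be linearly independent. For the backward implication, I would assume these $d$ vectors are linearly independent, hence form a basis of $\F^d$; define $L$ to be the unique linear map sending $\unit_j\mapsto\vecfe{e}_j-\vecfe{e}_0$ (existence and uniqueness being the standard universal property of the basis $\unit_1,\ldots,\unit_d$). Since $L$ sends a basis to a basis, it is a linear bijection, and then $A\defeq\tau\circ L$ with $\tau(\vec{p})=\vec{p}+\vecfe{e}_0$ is an affine transformation satisfying $A(\origin)=\vecfe{e}_0$ and $A(\unit_j)=\vecfe{e}_j$.

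For uniqueness, I would argue that any affine transformation $A'=\tau'\circ L'$ with the required action determines $\tau'$ completely via $\tau'(\origin)=A'(\origin)=\vecfe{e}_0$ (since $L'(\origin)=\origin$), and then determines $L'$ completely via its prescribed values on the basis $\unit_1,\ldots,\unit_d$. Thus both factors in the decomposition are forced, giving uniqueness of $A$.

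There is no serious obstacle here: the statement is standard linear algebra and the proof is essentially bookkeeping once one observes that translations are pinned down by their action on $\origin$ and linear maps are pinned down by their action on the standard basis. The only conceptual point worth flagging is that the decomposition $A=\tau\circ L$ is itself unique, which is what allows us to separately pin down $\tau$ from $A(\origin)=\vecfe{e}_0$ and then $L$ from the remaining data.
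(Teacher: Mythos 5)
Your proof is correct, and it is exactly the standard linear-algebra argument the paper has in mind: the paper states this proposition as ``well-known'' and marks it with $\Box$ to indicate the proof is omitted. Your reduction --- pinning down the translation part from $A(\origin)=\vecfe{e}_0$ and the linear part from its values on the standard basis, then invoking the fact that a linear map is a bijection precisely when it sends a basis to a basis --- works over any field and fully justifies both the equivalence and the uniqueness claim.
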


\begin{definition}
Suppose $\vecfe{e}_0 \in F^d$ and $\vec{\mathbf{e}} = ( \vecfe{e}_{1},\ldots,\vecfe{e}_{d} )$ where each $\vecfe{e}_{j} \in F^d$ for $1 \le j  \le d$. Provided the vectors $(\vecfe{e}_1-\vecfe{e}_0)$, \ldots, 
$(\vecfe{e}_d-\vecfe{e}_0)$ are linearly independent, we write 
$\basicaff$ for the unique
affine transformation of Proposition~\ref{affinszemlelet} that maps $\origin$ to $\vecfe{e}_0$ and each $\unit_j$ to the corresponding $\vecfe{e}_j$.
\end{definition}

We now introduce various formulas, in the language $\lang$ of $\field$, needed to capture the concepts expressed in
Proposition~\ref{affinszemlelet}.\footnote{Technically, we should write these using variable names $v_1, v_2, \dots$. However, to simplify our formulas and make them more intelligible, we will use metavariables such as $\vv{x}{1}$ and $\hat{x}_0$ for the various $v_n$'s and sequences thereof.} These are:

\medskip
\begin{itemize}

\item[$\iota$:]
Formula $\iota\in\formulas$ has
$d^2+d$ free variables and expresses the idea that vectors
$(\vv{x}{1}-\vv{x}{0}),\ldots,(\vv{x}{d}-\vv{x}{0})$ are linearly
independent:

\medskip
  $\begin{aligned} \iota(\hat{x}_0,\hat{x}_{1}, & \ldots,\hat{x}_{d})
  \defeq \\ & \left[ \forall \lambda_1\ldots\forall \lambda_d \left(
    \textstyle\sum_{i=1}^d\,\lambda_i(\vv{x}{i}-\vv{x}{0})=0\ \rightarrow\ \textstyle\bigwedge_{i=1}^d
    \lambda_i=0 \right) \right].
\end{aligned}$
\medskip

\item[$\theta$:] Taken together with $\iota$, formula
  $\theta\in\formulas$ (with $d^2+3d$ free variables) expresses the
  idea that affine map $\basicaffital$ takes vector $\vec{x}=\la
  x_1,\ldots,x_d\ra$ to vector $\vec{y}=\la y_1,\ldots,y_d\ra$:

\medskip
\begin{center}$
  \theta(\hat{e}_{0},\hat{e}_{1},\ldots,\hat{e}_{d},\hat{x},\hat{y})
  \defeq
     \left[ \vec{y}=\vv{e}{0}+\textstyle\sum_{i=1}^d\, x_i\cdot(\vv{e}{i}-\vv{e}{0})\right].
$\end{center}
\medskip

\item[$\theta_R$:]
For each $R=\laa \varrho,n\raa \in\cand$, we define a formula $\theta_{R}$
which expresses\footnotemark\ in the language of $\field$ that $\basicaffital$ 
exists and respects $R$:

\medskip
\(
  \begin{aligned}
  \theta_{R} & (\hat{e}_{0},\hat{e}_{1},\ldots,\hat{e}_{d}) 
               \ \defeq \ \iota(\hat{e}_0,\hat{e}_1, \ldots, \hat{e}_{d}) \\
             &  \land \ 
                  \left[ \ \forall \hat{x}_{1}\ldots\forall \hat{x}_{n}\forall\hat{y}_{1}\ldots\forall\hat{y}_{n}   \right. \\
             & \qquad \left. \left( \left( \textstyle\bigwedge_{i=1}^{n}\,
    \theta(\hat{e}_{0}, \hat{e}_{1}, \ldots, \hat{e}_{d}, \hat{x}_{i}, \hat{y}_{i}) \right) 
                  \rightarrow\ (\varrho(\hat{x}_{1}, \ldots, \hat{x}_{n})\ \leftrightarrow\
                  \varrho(\hat{y}_{1}, \ldots, \hat{y}_{n}))\right) \ \right].
  \end{aligned}
\)
\medskip

\footnotetext{We use Tarskian substitution of variables as follows: for formula $\varphi(v_1,\dots,v_n)$ and $x_1,\ldots, x_n$ metavariables, we define 
$\varphi(x_1,\dots,x_n)\ \defeq \   \exists v_1 (v_1=x_1 \land \dots (\exists v_n (v_n=x_n\land\varphi(v_1,\dots,v_n))\dots)$.
}

\item[$\theta_\RelSet$:]
Finally, if $\RelSet$ is a finite subset of $\cand$,
we define

\medskip
\begin{center}$
  \theta_\RelSet(\hat{e}_{0},\hat{e}_{1},\ldots,\hat{e}_{d})
  \ \defeq \ 
    \bigwedge_{R\in\RelSet}\,
    \theta_{R}(\hat{e}_{0},\hat{e}_{1},\ldots,\hat{e}_{d}).
$\end{center}
\medskip

\end{itemize}
\medskip

\begin{lemma}\label{lem-theta}

Suppose
 $R\in\cand$
and let $\RelSet$ be a finite subset of $\cand$, where $\lang$ is the language of $\field$.  
Suppose $\vecfe{e}_0, \vecfe{e}_1, \ldots, \vecfe{e}_d
\in
\F^d$. Then 
\begin{itemize}
\item[(i)] $\field \models \theta_R[\hatfe{e}_0, \hatfe{e}_1, \ldots,  \hatfe{e}_d
]$
  if{}f $(\vecfe{e}_1-\vecfe{e}_0),\ldots,
  (\vecfe{e}_d-\vecfe{e}_0)$ are linearly independent and $\basicaff$
  respects $\RFd$.
\item[(ii)] $\field \models \theta_\RelSet[\hatfe{e}_0, \hatfe{e}_1, \ldots, \hatfe{e}_d
  ]$ if{}f
  $(\vecfe{e}_1-\vecfe{e}_0),\ldots, (\vecfe{e}_d-\vecfe{e}_0)$ are
  linearly independent and $\basicaff$ is an affine automorphism of
  the FFD coordinate geometry
  $\geometry_{\RelSet}(\field)$.
\end{itemize}
\end{lemma}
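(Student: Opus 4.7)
The plan is to prove (i) by directly unpacking the semantics of $\iota$, $\theta$, and $\theta_R$ over $\field$, after which (ii) falls out immediately from the conjunctive form of $\theta_\RelSet$.

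For (i), I would first observe that $\iota(\hatfe{e}_0, \hatfe{e}_1, \ldots, \hatfe{e}_d)$ literally asserts that $(\vecfe{e}_1 - \vecfe{e}_0), \ldots, (\vecfe{e}_d - \vecfe{e}_0)$ are linearly independent, so by Proposition~\ref{affinszemlelet} its validity is equivalent to the existence (and uniqueness) of the affine transformation $\basicaff$. Assuming this holds, the next step is to verify that $\field \models \theta[\hatfe{e}_0, \ldots, \hatfe{e}_d, \hat{x}, \hat{y}]$ holds exactly when $\vec{y} = \basicaff(\vec{x})$; this is immediate because $\basicaff$ sends $\origin \mapsto \vecfe{e}_0$ and each $\unit_i \mapsto \vecfe{e}_i$, so for any $\vec{x} = \sum_{i=1}^{d} x_i \unit_i$ its image is $\vecfe{e}_0 + \sum_{i=1}^{d} x_i (\vecfe{e}_i - \vecfe{e}_0)$, which is precisely what $\theta$ expresses.

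With these identifications in hand, the second conjunct of $\theta_R$ becomes: for all $\vec{x}_1, \ldots, \vec{x}_n, \vec{y}_1, \ldots, \vec{y}_n \in F^d$, if $\basicaff(\vec{x}_i) = \vec{y}_i$ for every $i$, then $\varrho(\hat{x}_1, \ldots, \hat{x}_n) \leftrightarrow \varrho(\hat{y}_1, \ldots, \hat{y}_n)$. Since $\basicaff$ is a (total) function, quantifying over the $\vec{y}_i$'s is redundant, and the statement collapses to: for all $\vec{x}_1, \ldots, \vec{x}_n$, $\varrho(\hat{x}_1, \ldots, \hat{x}_n) \leftrightarrow \varrho(\widehat{\basicaff(\vec{x}_1)}, \ldots, \widehat{\basicaff(\vec{x}_n)})$. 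Unfolding \eqref{eq:def-Rf} via the flatten/unflatten correspondence \eqref{eq-flatten}, this is exactly the statement that $\basicaff$ respects $\RFd$, which completes (i). For (ii), since $\theta_\RelSet = \bigwedge_{R \in \RelSet} \theta_R$, its validity at $(\hatfe{e}_0, \ldots, \hatfe{e}_d)$ is equivalent (by (i)) to linear independence together with $\basicaff$ respecting every $\RFd$ for $R \in \RelSet$; by the definitions of $\geometry_\RelSet(\field)$ and of $\AffAut{\cdot}$, this is exactly the condition that $\basicaff$ is an affine automorphism of the FFD coordinate geometry $\geometry_\RelSet(\field)$.

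There is no deep obstacle here: the whole argument is a careful syntax-to-semantics translation. The one place where care is needed is the flatten/unflatten bookkeeping that identifies $\varrho(\hat{x}_1, \ldots, \hat{x}_n)$, which lives in the $\lang$-vocabulary of $\field$ on $(dn)$-tuples, with membership in $\RFd$ on $n$-tuples over $F^d$; making this identification explicit is what ensures that the biconditional in $\theta_R$ encodes "respects" rather than a weaker one-directional closure property.
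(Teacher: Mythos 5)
Your proposal is correct and follows essentially the same route as the paper's own (much terser) proof: read off linear independence from $\iota$, identify the map defined by $\theta$ with $\basicaff$ via Proposition~\ref{affinszemlelet}, unwind the second conjunct of $\theta_R$ through \eqref{eq:def-Rf} to get ``respects $\RFd$'', and obtain (ii) by conjunction over $\RelSet$. The extra care you take with the totality of $\basicaff$ (eliminating the $\vec{y}_i$ quantifiers) and with the flatten/unflatten bookkeeping is exactly the detail the paper leaves implicit.
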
 
\begin{proof}

\underline{(i)} Formula $\iota$ captures the independence of the
corresponding vectors in terms of coordinates. Similarly, in terms of
coordinates in $\field$, formula $\theta$ uniquely defines the affine
transformation $\basicaff$. By \eqref{eq:def-Rf}, the definition of
$\RFd$, we have that $\RFd$ is respected by the transformation defined
by $\theta$ exactly if the second part of formula $\theta_R$ holds.

\underline{(ii)} This follows
from (i) because the transformation defined by $\theta$ is an automorphism of $\geometry_{\RelSet}(\field)$ exactly if it respects all of the relations $\RFd$ corresponding to some $R\in\RelSet$.
\end{proof}

\begin{lemma} 
\label{affaut-lemma}
Let $\geometry_{\RelSet}(\field)$ be a coordinate geometry for
some finite $\RelSet\subseteq\cand$ and suppose $R\in\cand$. Then
$\field\models\theta_{\RelSet}\rightarrow\theta_R$ if{}f
every affine automorphism of $\geometry_{\RelSet}(\field)$ is an affine
automorphism of $\geometry_{\RelSet\cup\{R\}}(\field)$. 
\end{lemma}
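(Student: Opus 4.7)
The plan is to prove both directions of the biconditional by direct application of Lemma~\ref{lem-theta}, exploiting the one-to-one correspondence between affine transformations of $\F^d$ and tuples $(\vec{e}_0,\vec{e}_1,\ldots,\vec{e}_d)$ of points in $\F^d$ for which $(\vec{e}_1 - \vec{e}_0), \ldots, (\vec{e}_d - \vec{e}_0)$ are linearly independent (Proposition~\ref{affinszemlelet}).

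For the $(\Rightarrow)$ direction, I would assume $\field \models \theta_{\RelSet} \rightarrow \theta_R$ and take an arbitrary affine automorphism $A$ of $\geometry_{\RelSet}(\field)$. Setting $\vecfe{e}_0 := A(\origin)$ and $\vecfe{e}_j := A(\unit_j)$ for $1 \le j \le d$, the bijectivity of $A$ forces $(\vecfe{e}_1 - \vecfe{e}_0), \ldots, (\vecfe{e}_d - \vecfe{e}_0)$ to be linearly independent; by the uniqueness clause of Proposition~\ref{affinszemlelet}, $A = \basicaff$. Lemma~\ref{lem-theta}(ii) then gives $\field \models \theta_\RelSet[\hatfe{e}_0,\ldots,\hatfe{e}_d]$, so by assumption $\field \models \theta_R[\hatfe{e}_0,\ldots,\hatfe{e}_d]$, and Lemma~\ref{lem-theta}(i) tells us that $A$ respects $\RFd$. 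Hence $A$ is also an affine automorphism of $\geometry_{\RelSet \cup \{R\}}(\field)$.

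For the $(\Leftarrow)$ direction, I would assume every affine automorphism of $\geometry_\RelSet(\field)$ is an affine automorphism of $\geometry_{\RelSet \cup \{R\}}(\field)$. To verify $\field \models \theta_\RelSet \rightarrow \theta_R$, take any $\vecfe{e}_0,\ldots,\vecfe{e}_d \in \F^d$ with $\field \models \theta_\RelSet[\hatfe{e}_0,\ldots,\hatfe{e}_d]$. By Lemma~\ref{lem-theta}(ii), the differences are linearly independent and $\basicaff$ is an affine automorphism of $\geometry_\RelSet(\field)$; by hypothesis, $\basicaff$ is therefore also an affine automorphism of $\geometry_{\RelSet \cup \{R\}}(\field)$, and in particular respects $\RFd$. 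Lemma~\ref{lem-theta}(i) then yields $\field \models \theta_R[\hatfe{e}_0,\ldots,\hatfe{e}_d]$, as required.

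There is no real obstacle here once Lemma~\ref{lem-theta} is in place; the only point that requires a moment's care is checking, in the $(\Rightarrow)$ direction, that every affine automorphism is really of the form $\basicaff$ for some admissible tuple --- which is immediate from the uniqueness half of Proposition~\ref{affinszemlelet} applied to the image of the standard reference frame $(\origin,\unit_1,\ldots,\unit_d)$ under $A$.
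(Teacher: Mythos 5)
Your proof is correct and is exactly the argument the paper intends: the paper's own proof consists of the single line ``Follows from Lemma~\ref{lem-theta}'', and your write-up simply supplies the details of that deduction, including the one genuinely necessary observation that every affine automorphism arises as $\basicaff$ for the image of $(\origin,\unit_1,\ldots,\unit_d)$ via the uniqueness clause of Proposition~\ref{affinszemlelet}.
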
 
\begin{proof}
Follows from  Lemma~\ref{lem-theta}.
\end{proof}

\subsection{Results obtained using ultrapowers}
The next theorem and lemma concern the ultrapowers of a model $\Model$, and will be used below to 
complete our proof of Theorem~\ref{THM1}.
For the benefit of readers who are unfamiliar with this concept, and also to fix our notation,
we briefly recall the following definitions. For a more detailed account, see, \eg \cite{Ho93}.

Suppose that $M$ is the universe of a model $\Model$ of some FOL language $\lang$, and that $\ultrafilter$
is an ultrafilter on some set $I$ (called an \emph{index set}); that is, $\ultrafilter$ is a collection of subsets of $I$ satisfying 
(i) $\varnothing \not\in \ultrafilter$; 
(ii) whenever
$S_1$ and $S_2$ are members of $\ultrafilter$, so is $S_1 \cap S_2$; 
(iii) whenever
$S \in	\ultrafilter$ and $S \subseteq T \subseteq I$, then $T \in \ultrafilter$; and
(iv) given any $S \subseteq I$, either $S$ or $I \setminus S$ is in $\ultrafilter$.
One can use $\ultrafilter$ to define an equivalence relation $\Uequiv$ on the collection $\fromIto M$ of 
all functions from $I$ to $M$ by defining, for all $f_1, f_2\colon I \to M$,
\[
	f_1 \Uequiv f_2  \quad\defiff\quad  \{ i : f_1(i) = f_2(i) \} \in	\ultrafilter .
\]
Given any $f\colon I \to M$, we write $f/\ultrafilter$ for the $\Uequiv$-equivalence class containing $f$, and $\fromIto M / \ultrafilter$ for the set of all such equivalence classes;

The \emph{ultrapower} of $\Model$ according to $\ultrafilter$ (written $\Model^\ultrafilter$) is the model of $\lang$ whose universe is the set
of $\Uequiv$-equivalence classes, in which the interpretation $\rel^\ultrafilter$ of any ($n$-ary) relation symbol $R$ is defined, for  $f_1, \dots, f_n \in \fromIto M$, by
\begin{equation}
\label{ultradef}
		(f_1/\ultrafilter, \dots, f_n/\ultrafilter) \in	\rel^\ultrafilter \quad\defiff\quad  
		   \{ i : ( f_1(i), \dots, f_n(i) ) \in	 \rel \} \in \ultrafilter ,
\end{equation}
where $\rel$ is the interpretation of $R$ in $\Model$. The
interpretations of functions symbols and constants are defined
analogously. Given any ($n$-ary) formula $\phi$, {\L}os's Theorem
\cite[Thm.~8.5.1]{Ho93} tells us that
\begin{equation}
\label{Los} 
  \Model^\ultrafilter \models \phi[ f_1/\ultrafilter, \dots, f_n/\ultrafilter]
	   \quad \iff \quad
	\{ i : \Model \models \phi[ f_1(i), \dots, f_n(i) ] \} \in \ultrafilter .
\end{equation}

An easy but important consequence of \eqref{ultradef} and \eqref{Los} is that whenever $\field$ is a field or ordered field, so is $\field^\ultrafilter$.

The following theorem is due to Andr\'as Simon:\footnote{Here we give 
an independent proof. Simon's proof (personal communication) uses Keisler--Shelah's theorem applied twice.}

\begin{theorem}
\label{simon-tetel}
Let $\Model$ be a FOL model and $\rel$ a relation on its universe.  
Then $\rel$ is definable in $\Model$ if{}f, for every
ultrapower $\la \Model^\ultrafilter,\rel^\ultrafilter\ra$ of $\la
\Model, \rel\ra$, every automorphism of $\Model^\ultrafilter$ is an
automorphism of $\la \Model^\ultrafilter,\rel^\ultrafilter\ra$.
\end{theorem}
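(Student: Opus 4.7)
The forward direction is an immediate consequence of {\L}o\'{s}'s theorem \eqref{Los}: if $\rel$ is defined in $\Model$ by some $\lang$-formula $\varphi$, then $\varphi$ also defines $\rel^\ultrafilter$ in $\Model^\ultrafilter$ for every ultrafilter $\ultrafilter$, and any $\lang$-definable relation is automatically respected by all $\lang$-automorphisms.

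For the converse direction I would argue by contrapositive, assuming $\rel$ is not definable in $\Model$. Let $n$ be the arity of $\rel$, let $R$ be a fresh $n$-ary relation symbol interpreted as $\rel$ in $\la \Model,\rel\ra$, and let $\bar c,\bar d$ be two fresh $n$-tuples of constants. A partition/compactness argument shows that the set of $\lang\cup\{R,\bar c,\bar d\}$-sentences
\[
\Sigma(\bar c,\bar d)\ \defeq\ \{R(\bar c),\ \neg R(\bar d)\}\ \cup\ \{\varphi(\bar c)\leftrightarrow\varphi(\bar d):\varphi\in\formulas\}
\]
is finitely satisfiable in $\la \Model,\rel\ra$: given any finite $\{\varphi_1,\ldots,\varphi_k\}\subseteq\formulas$, the $2^k$ Boolean atoms of the $\varphi_i$ partition $M^n$, and if each atom were entirely inside or entirely outside $\rel$ then $\rel$ would be the disjunction of those atoms it contains, contradicting non-definability; hence some atom contains both an $\bar a\in\rel$ and a $\bar b\notin\rel$, witnessing the required finite piece of $\Sigma$.

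The next step is to realize $\Sigma$ in a sufficiently rich ultrapower of $\la \Model,\rel\ra$ and promote the realizing pair to the desired automorphism. I would take an ultrafilter $\ultrafilter$ so that $\la \Model,\rel\ra^\ultrafilter$ is $\kappa$-saturated for some $\kappa$ strictly exceeding both $|\formulas|$ and the cardinality of the ultrapower (existence of such $\ultrafilter$ is guaranteed by standard good-ultrafilter constructions, see e.g.~\cite{Ho93}). Finite satisfiability together with saturation gives a realization of $\Sigma$ in the ultrapower by a pair $(\bar a,\bar b)$ with $\bar a\in\rel^\ultrafilter$, $\bar b\notin\rel^\ultrafilter$, and sharing every $\lang$-formula. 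The $\lang$-reduct $\Model^\ultrafilter$ inherits $\kappa$-saturation and hence $\kappa$-homogeneity, so the partial elementary map $\bar a\mapsto\bar b$ extends via back-and-forth to an $\lang$-automorphism $f$ of $\Model^\ultrafilter$. Since $\bar a\in\rel^\ultrafilter$ but $f(\bar a)=\bar b\notin\rel^\ultrafilter$, this $f$ is the required automorphism of $\Model^\ultrafilter$ that is not an automorphism of $\la \Model^\ultrafilter,\rel^\ultrafilter\ra$.

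The main obstacle is arranging the ultrapower to be simultaneously saturated \emph{and} cardinality-bounded enough that the back-and-forth extending $\bar a\mapsto\bar b$ terminates in a full automorphism; this cardinal-arithmetic tuning of $\ultrafilter$ is the technical heart of the argument. The partition-style compactness step and the remaining bookkeeping are otherwise routine.
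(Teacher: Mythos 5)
Your forward direction and your partition/compactness argument for the finite satisfiability of $\Sigma(\bar c,\bar d)$ are both correct, and realizing $\Sigma$ in a suitably saturated ultrapower is indeed routine. The gap is exactly where you locate "the technical heart": the passage from the pair $(\bar a,\bar b)$ to an automorphism of $\Model^\ultrafilter$. First, the condition you ask for is literally unsatisfiable: no infinite model is $\kappa$-saturated for any $\kappa$ strictly exceeding its own cardinality (the type $\{x\neq m : m\in M\}$ over the whole universe is finitely satisfiable but omitted). What you actually need is an ultrapower that is saturated \emph{in its own cardinality} (equivalently, strongly homogeneous), so that the partial elementary map $\bar a\mapsto\bar b$ extends by back-and-forth to a genuine automorphism. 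This is not guaranteed by standard good-ultrafilter constructions in ZFC: a $\lambda^+$-good ultrafilter on $\lambda$ yields a $\lambda^+$-saturated ultrapower of cardinality up to $2^\lambda$, and concluding that this ultrapower is saturated requires $2^\lambda=\lambda^+$, an instance of GCH that can fail at every cardinal. Without that homogeneity the back-and-forth gets stuck after $\lambda^+$ steps and you only obtain a partial elementary map, not an automorphism, so your proof as written does not go through in ZFC.

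The paper sidesteps this precise difficulty by quoting two black boxes: Svenonius's theorem, which produces some model $\la\Nodel,S\ra$ elementarily equivalent to $\la\Model,\rel\ra$ whose reduct has an automorphism not respecting $S$, and then the Keisler--Shelah isomorphism theorem (a ZFC result, thanks to Shelah) to find an ultrafilter $\ultrafilter$ with $\la\Model^\ultrafilter,\rel^\ultrafilter\ra\cong\la\Nodel^\ultrafilter,S^\ultrafilter\ra$; the bad automorphism of $\Nodel$ lifts componentwise to $\Nodel^\ultrafilter$ and transfers through the isomorphism. If you want to salvage your more self-contained route, you can replace the homogeneity step by the same device: having found $\bar a,\bar b$ in $\Model^\ultrafilter$ with the same $\lang$-type but on opposite sides of $\rel^\ultrafilter$, note that $\la\Model^\ultrafilter,\bar a\ra\equiv\la\Model^\ultrafilter,\bar b\ra$, apply Keisler--Shelah to get an isomorphism of their $\mathcal V$-ultrapowers, and observe that $(\Model^\ultrafilter)^{\mathcal V}$ is again an ultrapower of $\Model$ (via the product ultrafilter), in which that isomorphism is an automorphism moving a point of the interpretation of $\rel$ out of it. Either way, some appeal to Keisler--Shelah (or to GCH-type hypotheses) seems unavoidable; your sketch currently hides this inside an existence claim for ultrafilters that is false as stated.
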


\begin{proof}
By \cite[Cor.1 on p.969]{SvenoniusThm}, if $\rel$ is not definable in
$\Model$, then there is a model $\la \Nodel, S\ra$ elementarily
equivalent to $\la \Model, \rel\ra$ such that relation $S$ is not
respected by some automorphism $f$ of $\Nodel$. By the Keisler--Shelah
isomorphism theorem, there is an ultrafilter $\ultrafilter$ such that
$\la \Model^\ultrafilter,\rel^\ultrafilter\ra$ is isomorphic to $\la
\Nodel^\ultrafilter,S^\ultrafilter\ra$. The automorphism $f$ of
$\Nodel$ extends componentwise to an automorphism of
$\Nodel^\ultrafilter$ which does not respect $S^\ultrafilter$, and
hence it is not an automorphism of $\la \Nodel^\ultrafilter,
S^\ultrafilter\ra$.  Since $\la \Nodel^\ultrafilter,
S^\ultrafilter\ra$ is isomorphic to $\la
\Model^\ultrafilter,\rel^\ultrafilter\ra$, there must also be an
automorphism of $\Model^\ultrafilter$ that is not an automorphism of
$\la \mathfrak{M}^\ultrafilter,\rel^\ultrafilter\ra$.
The other direction follows immediately because automorphisms have
to respect definable relations in every model.
\end{proof}

\begin{remark}\label{rem:fin}
The fact that in finite structures a relation is definable if{}f it is respected by all automorphisms of the structure is a simple corollary of Theorem~\ref{simon-tetel} since any ultrapower of a finite structure is isomorphic to the original structure.
\end{remark}

\begin{definition}
  If $M$ is some set and $j\leq d$ ($j \in \natnumb$), we write
  $\proj_j$ for the $j$-th projection function from $M^d$ to $M$, \ie
  $ \proj_j\colon\laa p_1,\ldots, p_j,\ldots, p_d\raa \mapsto p_j.  $
  Given a set $M$ and an ultrafilter $\ultrafilter$ over some index
  set $I$, we define the function $\Uproj_{M \ultrafilter} \colon
  (M^d)^\ultrafilter \to (M^\ultrafilter)^d$ by
\[
\Uproj_{M \ultrafilter}(f/\ultrafilter) 
	\defeq \laa (\proj_1\circ f)/\ultrafilter,
\ldots, (\proj_d\circ f)/\ultrafilter\raa
\]
for all $f\colon I \to M^d$.
Since the context is always clear where we use this function below, we will generally abbreviate $\Uproj_{M \ultrafilter}$ to $\Uproj$ in what follows.
\end{definition}

\begin{lemma}
\label{lemma1}
Let $\Model$ be a model for some first-order language $\lang$, let $M$
be its universe, let $\ultrafilter$ be an ultrafilter over some index
set $I$, and $\RelSet\subseteq \cand$.  Then,
\begin{itemize}
\item[(i)] $\Uproj$ is well-defined; and
\item[(ii)] $\Uproj$ is an isomorphism from 
$\geometry_\RelSet(\Model)^{\,\ultrafilter}$ to 
$\geometry_\RelSet\left(\Model^{\,\ultrafilter}\right)$, and hence these models are isomorphic.  
\end{itemize}
\end{lemma}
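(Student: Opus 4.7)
The plan is to prove (i) first, then derive (ii) from it: the map $\Uproj$ is built coordinatewise from the projections $\proj_j$, so every claim about it should reduce either to a simple closure property of $\ultrafilter$ or to {\L}os's theorem \eqref{Los}. No deeper model-theoretic machinery is needed.

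For (i), I would start from $f_1, f_2 \colon I \to M^d$ with $f_1 \Uequiv f_2$, so $\{i : f_1(i) = f_2(i)\} \in \ultrafilter$. For each $j \le d$ this set is contained in $\{i : (\proj_j \circ f_1)(i) = (\proj_j \circ f_2)(i)\}$, which is therefore also in $\ultrafilter$ by upward closure. Hence $(\proj_j \circ f_1)/\ultrafilter = (\proj_j \circ f_2)/\ultrafilter$ for every $j$, so $\Uproj(f_1/\ultrafilter) = \Uproj(f_2/\ultrafilter)$ as required.

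For (ii) I would first verify bijectivity, and then preservation of every relation symbol in $\RelSet$. \emph{Injectivity} runs the previous argument in reverse: if $\Uproj(f_1/\ultrafilter) = \Uproj(f_2/\ultrafilter)$, each of the $d$ sets $S_j = \{i : (\proj_j \circ f_1)(i) = (\proj_j \circ f_2)(i)\}$ lies in $\ultrafilter$, and since $d$ is finite so does their intersection; on that intersection $f_1$ and $f_2$ agree componentwise, so $f_1 \Uequiv f_2$. \emph{Surjectivity} is witnessed by taking $f(i) \defeq \la g_1(i), \ldots, g_d(i) \ra$ whenever a target point $\la g_1/\ultrafilter, \ldots, g_d/\ultrafilter \ra \in (M^\ultrafilter)^d$ is given. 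For \emph{preservation} of a relation symbol $R = \la \varrho, n \ra \in \RelSet$, I would unfold both sides down to membership conditions in $\ultrafilter$ and compare: by \eqref{eq:def-Rf} and \eqref{ultradef}, the condition $(f_1/\ultrafilter, \ldots, f_n/\ultrafilter) \in (\RMd)^\ultrafilter$ is equivalent to $\{i : \Model \models \varrho[\flatten(f_1(i), \ldots, f_n(i))]\} \in \ultrafilter$. On the other side, the flattening of the image-tuple $(\Uproj(f_1/\ultrafilter), \ldots, \Uproj(f_n/\ultrafilter))$ is just the $(dn)$-tuple of equivalence classes $(\proj_k \circ f_j)/\ultrafilter$, so {\L}os's theorem rewrites the condition $(\Uproj(f_1/\ultrafilter), \ldots, \Uproj(f_n/\ultrafilter)) \in R_{\Model^\ultrafilter, d}$ as $\{i : \Model \models \varrho[\ldots (\proj_k \circ f_j)(i) \ldots]\} \in \ultrafilter$. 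The two membership conditions coincide because flattening a point of $M^d$ is exactly the sequence of its $d$ projections.

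The main obstacle is purely bookkeeping: one has to keep straight three simultaneous ``layers'' of tuples---components inside $M^d$, $\ultrafilter$-equivalence classes of $I$-indexed sequences, and the flattened $(dn)$-tuples that actually feed into the formula $\varrho$---and observe that the three layers commute through the projections $\proj_j$. Once this is set up carefully, every step reduces to {\L}os's theorem and the closure of $\ultrafilter$ under finite intersection and supersets.
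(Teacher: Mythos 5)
Your proposal is correct and follows essentially the same route as the paper's own proof: upward closure of $\ultrafilter$ for well-definedness, finite intersections for injectivity, an explicit componentwise preimage for surjectivity, and the chain of equivalences through \eqref{eq:def-Rf}, \eqref{ultradef} and {\L}os's theorem \eqref{Los} for preservation of relations. No gaps.
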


\begin{proof}
\underline{(i)}
To see that $\Uproj$ is well-defined, assume that
$\gee{p}/\ultrafilter=\gee{q}/\ultrafilter$, where $\gee{p}, \gee{q}
\in \fromIto{(M^d)}$. 
Then
\[
 \{i : \gee{p}(i)=\gee{q}(i)\}\in\ultrafilter.
\]
Clearly, for every $j\leq d$,
\[
\{i : \gee{p}(i)=\gee{q}(i)\}\subseteq\{i : \proj_j\circ \gee{p}(i)
=\proj_j\circ\gee{q}(i)\}.
\]
Therefore, $\{i : \proj_j\circ \gee{p}(i)
=\proj_j\circ\gee{q}(i)\}\in\ultrafilter$, and so 
$(\proj_j\circ\gee{p})/\ultrafilter
=(\proj_j\circ \gee{q})/\ultrafilter$ for every $j\leq d$. Hence,
$\Uproj$ is well-defined.

\underline{(ii)} First, we prove that $\Uproj$ is both surjective and injective, and hence a bijection.

To see that $\Uproj$ is surjective, let $p_1\colon I\rightarrow M$, \ldots, 
${p}_d\colon I\rightarrow M$. We want to prove that
$\laa {p}_1/\ultrafilter,\ldots,{p}_d/\ultrafilter\raa$ is in the range of $\Uproj$.
Let $\gee{p}\colon I\rightarrow M^d$ be defined by 
$\gee{p}(i):=\laa {p}_1(i),\ldots,{p}_d(i)\raa$. Clearly, 
$\Uproj(\gee{p}/\ultrafilter)=\laa
{p}_1/\ultrafilter,\ldots,{p}_d/\ultrafilter\raa$. 
Thus $\Uproj$ is surjective. 

To see that $\Uproj$ is injective, assume that
$\Uproj(\gee{p}/\ultrafilter)= \Uproj (\gee{q}/\ultrafilter)$. From the definition of
$\Uproj$, this means that
$
\laa (\proj_1\circ\gee{p})/\ultrafilter,\ldots,(\proj_d\circ\gee{p})/\ultrafilter
\raa= \laa (\proj_1\circ\gee{q})/\ultrafilter,\ldots,(\proj_d\circ\gee{q})
/\ultrafilter\raa.
$
Therefore, for every $j\leq d$, $\{i :
\proj_j\circ\gee{p}(i)=\proj_j\circ\gee{q}(i)\}\in\ultrafilter$, and so
\[
\{i : \gee{p}(i)=\gee{q}(i)\} = \bigcap_{j=1}^{d}\{i :
\proj_j\circ\gee{p}(i)=\proj_j\circ\gee{q}(i)\}\in\ultrafilter.
\]
It follows that $\gee{p}/\ultrafilter =\gee{q}/\ultrafilter$, showing that $\Uproj$ is
injective, and hence a bijection as claimed.
\smallskip

To prove that $\Uproj$ is an isomorphism, it remains to prove that
$\Uproj$ respects the relations.
Let us note that $\langD$ 
is the common language of $\geometry_\RelSet(\Model)^{\,\ultrafilter}$ 
and $\geometry_\RelSet\left(\Model^{\,\ultrafilter}\right)$. 
  Let $R=\la\varrho,n\ra\in\RelSet$
be a relation symbol in $\langD$. By our definitions (Sect.~\ref{relations_on_dtuples})
the interpretations of $R$ in $\geometry_{\RelSet}(\Model)$
and $\geometry_\RelSet\left(\Model^{\,\ultrafilter}
\right)$ are $\RMd$ and  $\RMUd$, respectively. 
Let us denote the
interpretation of $R$ in $\geometry_\RelSet(\Model)^{\,\ultrafilter}$
by $\RMdU$, and suppose
$\gee{p_1}\colon I\rightarrow M^d$,
\ldots, $\gee{p_n}\colon I\rightarrow M^d$. 
We have to prove that
\begin{equation}
\label{ee1}
\RMdU(\gee{p_1}/\ultrafilter,\ldots,\gee{p_n}/\ultrafilter)
\ \Longleftrightarrow\  \RMUd
\left(\Uproj(\gee{p_1}/\ultrafilter),\ldots, \Uproj(\gee{p_n}/\ultrafilter)\right).
\end{equation}

This follows directly from \eqref{eq:def-Rf},
\eqref{ultradef} and \eqref{Los}, since we have
\begin{multline*}
\RMdU(\gee{p_1}/\ultrafilter,\ldots, \gee{p_n}/\ultrafilter)\
\Longleftrightarrow\ 
\{i: \RMd(\gee{p_1}(i),\ldots,\gee{p_n}(i))\}\in\ultrafilter\
\Longleftrightarrow\\
\{i :
\Model\models\varrho[\proj_1\circ\gee{p_1}(i),\ldots,\proj_d\circ\gee{p_1}(i),\ldots,
\proj_1\circ\gee{p_n}(i),\ldots,\proj_d\circ\gee{p_n}(i)]\}\in\ultrafilter\
\Longleftrightarrow\\
\Model^{\,\ultrafilter}\models
\varrho\left[(\proj_1\circ\gee{p_1})/\ultrafilter,\ldots,
(\proj_d\circ\gee{p_1})/\ultrafilter,\ldots
      (\proj_1\circ\gee{p_n})/\ultrafilter,\ldots,
(\proj_d\circ\gee{p_n})/\ultrafilter\right]\ \Longleftrightarrow\\
\RMUd
(\laa (\proj_1\circ\gee{p_1})/\ultrafilter,\ldots,
(\proj_d\circ\gee{p_1})/\ultrafilter\raa,\ldots,
      \laa(\proj_1\circ\gee{p_n})/\ultrafilter,\ldots,
(\proj_d\circ\gee{p_n})/\ultrafilter\raa)\
\Longleftrightarrow\\
\RMUd
\left(\Uproj(\gee{p_1}/\ultrafilter),\ldots, \Uproj(\gee{p_n}/\ultrafilter)\right).      
\end{multline*}

Thus \eqref{ee1} holds and $\Uproj\colon
\geometry(\Model)^{\,\ultrafilter}\rightarrow\geometry(\Model^{\,
  \ultrafilter})$ is an isomorphism.
\end{proof}

\medskip
\subsection{Proof of Theorem~\ref{THM1}}
We are now in a position to complete the proof of Theorem~\ref{THM1}.

\begin{blockindent}
\THMone*
\end{blockindent}

\begin{proof}
\label{THM1-proof}
(i) $\Rightarrow$ (ii): Since $\rel$ is definable in $\geometry$ it
must be closed under automorphisms of $\geometry$. So to prove (ii),
it is enough to show that $\rel$ is definable over $\field$. To do so,
we define a translation $\Tr$ from the language of $\geometry$ to the
language of $\field$.  First, for each $i\in\natnumb$, we associate
the list $v_{1+(i-1)d},\dots,v_{id}$ of $d$-many field-variables to
geometry-variable $\gv{v}_i$.  For every relation symbol $S$ in the
language $\geometry$, let $\sigma_S$ be a formula of the language of
$\field$ such that $\sigma_S$ defines $\widehat{S}$. Such a formula
exists by definition because $\geometry$ is a field-definable
coordinate geometry.

Then let $\Tr$ be defined recursively as follows: for every relation symbol $S$ of $\geometry$,
\begin{align*}
   \label{eq:TrR}  \Tr\big(S(\gv{v}_i,\dots)\big) &\quad = \quad \sigma_S(v_{1+(i-1)d},\dots,v_{id},\dots),\\
  \Tr(\gv{v}_i=\gv{v}_j) &\quad =\quad v_{1+(i-1)d}=v_{1+(j-1)d}\land\dots\land v_{id}=v_{jd},\\
    \Tr(\lnot \varphi) & \quad = \quad \lnot\Tr(\varphi)\\
  \Tr(\varphi \land \psi)&\quad = \quad \Tr(\varphi)\land \Tr(\psi), \text{ and}\\
  \exists \gv{v}_i \varphi &\quad =\quad \exists v_{1+(i-1)d}\dots\exists v_{id} \Tr(\varphi).
\end{align*}

Then, by a straightforward formula induction, one can prove
  that, for every formula $\varphi(\gv{v}_1,\ldots,\gv{v}_n)$ in the
  language of $\geometry$ and points $\vec{p}_1, \dots, \vec{p}_n$, we
  have
\begin{equation}\label{eq:tr}
  \geometry\models \varphi [\vec{p}_1,\dots,\vec{p}_n] \iff \field \models \Tr(\varphi)[\hat{p}_1,\dots,\hat{p}_n].  
\end{equation}
For example, if $\varphi$ is atomic formula $S(\gv{v}_1,\ldots,\gv{v}_n)$, then  $\geometry\models S[\vec{p}_1,\dots,\vec{p}_n]$ holds if{}f $(\vec{p}_1,\dots,\vec{p}_n)\in S$ (where $S$ denotes also the interpretation of $S$ in $\geometry$ for notational simplicity). This, by \eqref{eq-flatten}, is equivalent to $(\hat{p}_1,\dots,\hat{p}_n)\in\widehat S$, which holds if{}f $\field\models \sigma_S[\hat{p}_1,\dots,\hat{p}_n]$ since $\sigma_S$ defines $\widehat{S}$ in $\field$. This proves \eqref{eq:tr} for this base case of the formula induction. The other base cases $S(\gv{v}_{i_1},\ldots,\gv{v}_{i_n})$ and $\gv{v}_i=\gv{v}_j$ are similar and the inductions steps are trivial.

Now let $\varrho$ be the formula defining $\rel$ in $\geometry$. Then,
by \eqref{eq:tr}, $\geometry\models
\varrho[\vec{p}_1,\dots,\vec{p}_n]$ if{}f $\field\models
\Tr(\varrho)[\hat{p}_1,\dots,\hat{p}_n]$, \ie $\Tr(\varrho)$ defines
$\widehat \rel$ in $\field$, and hence $\rel$ is definable over
$\field$ as stated.

\smallskip
\noindent
(ii) $\Rightarrow$ (iii): Trivial as affine automorphisms are also automorphisms.

\smallskip
\noindent
(iii) $\Rightarrow$ (i): Assume that $\rel$ is definable over $\field$
and closed under affine automorphisms of $\geometry$.  We have to
prove that $\rel$ is definable in $\geometry$.  As affine
automorphisms of $\geometry$ form a group under composition, they
respect $\rel$.

Let $\lang$ be the language of $\field$, let $R\in\cand$ be the
relation symbol for which $\rel=\RFd$, and let $\RelSet$ be a finite
subset of $\cand$ containing $\keysymbol$, such that $\la\geometry,
\keyrelation \ra\doteq\geometry_{\RelSet}(\field)$. Such $R$ and
$\RelSet$ exist by Remarks~\ref{rem-clearly},
\ref{rem:gammaCol-betaBw} and \ref{prop-doteq}.  Since $\RelSet$
contains $\la\kappa,3\ra$ and $\la\kappa,3\ra_{{\field}{d}}$ coincides
with $\keyrelation$, $\geometry_{\RelSet}(\field)$ is a coordinate
geometry as well, and it follows easily that the definable relations,
automorphisms and affine automorphisms of $\geometry$ and
$\geometry_{\RelSet}(\field)$ coincide.  Hence affine automorphisms of
$\geometry_{\RelSet}(\field)$ respect $\rel=\RFd$, and to prove that
$\rel=\RFd$ is definable in $\geometry$, it is enough to prove that
$\RFd$ is definable in $\geometry_{\RelSet}(\field)$.  Clearly, the
expansion of $\geometry_{\RelSet}(\field)$ with $\RFd$ to the language
$\lang_{\RelSet\cup\{R\}}$ is $\geometry_{\RelSet\cup\{R\}}(\field)$
and every affine automorphism of $\geometry_{\RelSet}(\field)$ is an
affine automorphism of $\geometry_{\RelSet\cup\{R\}}(\field)$, so by
Lemma~\ref{affaut-lemma},
$\field\models(\theta_{\RelSet}\rightarrow\theta_{R})$.  We will apply
Theorem~\ref{simon-tetel} to obtain a proof that $\RFd$ is definable
in $\geometry_{\RelSet}(\field)$, namely we will prove that for any
ultrafilter $\ultrafilter$, every automorphism of
$(\geometry_{\RelSet}(\field))^\ultrafilter$ is an automorphism of
$(\geometry_{\RelSet\cup\{R\}}(\field))^\ultrafilter$.  To prove this,
let $\ultrafilter$ be any ultrafilter (on any index set).  Applying
Lemma~\ref{lemma1} (twice), we observe that the function $\Uproj$ is
both an isomorphism from $(\geometry_{\RelSet}(\field))^\ultrafilter$
to $\geometry_{\RelSet}(\field^\ultrafilter)$, and also an isomorphism
from $(\geometry_{\RelSet\cup\{R\}}(\field))^\ultrafilter$ to
$\geometry_{\RelSet\cup\{R\}}(\field^\ultrafilter)$.

\medskip
It is therefore enough to prove that every automorphism of
$\geometry_{\RelSet}(\field^\ultrafilter)$ is an automorphism of
$\geometry_{\RelSet\cup\{R\}}(\field^{\,\ultrafilter})$.  To this end,
we observe that the model
$\geometry_{\RelSet}(\field^{\,\ultrafilter})$ is a coordinate
geometry over $\field^{\,\ultrafilter}$ because $\RelSet$
contains $\la\kappa,3\ra$ and relation $\laa
\keyformula,3\raa_{\udfield d}$ of
$\geometry_{\RelSet}(\field^{\,\ultrafilter})$ coincides with the
relation $\keyrelation$ of the affine geometry over
$\field^{\ultrafilter}$, cf.\ Remark~\ref{rem:gammaCol-betaBw}.
 
Moreover, we know that
$\field^\ultrafilter\models(\theta_{\RelSet}\rightarrow
\theta_{R})$ because $\field \models
(\theta_{\RelSet}\rightarrow \theta_{R}) $. By
Lemma~\ref{affaut-lemma},  this implies that every affine
automorphism of $\geometry_{\RelSet}(\field^\ultrafilter)$ is an affine
automorphism of $\geometry_{\RelSet\cup\{R\}}(\field^\ultrafilter)$ as well.

Assume therefore that $\alpha$ is an arbitrary automorphism of
$\geometry_{\RelSet}(\field^\ultrafilter)$. By
Proposition~\ref{autlemma}, we can write $\alpha =
A\circ\widetilde{\varphi}$ for some affine automorphism $A$ of
$\geometry_{\RelSet}(\field^{\,\ultrafilter})$ and some automorphism
$\varphi$ of $\field^{\,\ultrafilter}$. Since $A$ is also an affine
automorphism of
$\geometry_{\RelSet\cup\{R\}}(\field^{\,\ultrafilter})$, applying
Proposition~\ref{autlemma} once again shows that $\alpha=
A\circ\widetilde{\varphi}$ is an automorphism of
$\geometry_{\RelSet\cup\{R\}}(\field^{\,\ultrafilter})$ as required.
\end{proof}

\begin{remark}
The condition that $\field$ has more than two elements cannot be
omitted from Theorem~\ref{THM1} when $\field$ is an (unordered)
field. To see this, let us first note that every permutation of
$\F^d$ is an automorphism of the $d$-dimensional affine geometry over
the two element field.  This is so because, in this geometry,
$\Col(\vec{p},\vec{q},\vec{r}\,)$ holds if{}f $\vec{q}=\vec{p}$ or
$\vec{q}=\vec{r}$\ or $\vec{p}=\vec{r}$ since the value
$\lambda$ in the definition of $\Col$ can only take the values $0$ and
$1$.  So lines here are exactly the two element subsets of $F^d$, and
every permutation of $F^d$ respects the relation $\Col$.  In
  other words, $\Col$ here is definable from equality, and hence
  every structure over the two element field is a coordinate
  geometry.

Bearing this in mind, let us consider the following 3-dimensional
geometry over the two element field: $\geometry=\la \{0,1\}^3, O, U_x,
U_y, U_z\ra$, where unary relations $O=\{\la0,0,0\ra\}$, $U_x=\{\la
1,0,0\ra\}$, $U_y=\{\la 0,1,0\ra\}$, $U_z=\{\la 0,0,1\ra\}$ color the
origin and the three standard basis vectors in $F^3$,
respectively. Now $\alpha\in\aut{\geometry}$ if{}f $\alpha$ is a
bijection that fixes points $\la0,0,0\ra$, $\la 1,0,0\ra$, $\la
0,1,0\ra$, $\la 0,0,1\ra$, while $\AffAut{\geometry}=\{\Id\}$ because
no nontrivial affine transformation fixes the origin and all the
three standard basis vectors. Let $\rel=\{\la 1,1,1\ra\}$. Then $\rel$
is not definable in $\geometry$ because it is not respected by the
automorphisms of $\geometry$, yet it is field-definable and respected
by all the affine automorphisms of $\geometry$. So (iii) holds for
$\rel$ but (i) does not.

Nevertheless, the equivalence between (i) and (ii) holds even
  over the two element field because, if $\geometry$ is finite, then
  the definable relations of $\geometry$ are exactly those which are
  closed under the automorphisms of $\geometry$, see
  Remark~\ref{rem:fin}. \qed
\end{remark}

\section{Concluding remarks}

We have introduced FFD coordinate geometries over
fields and ordered fields, and have shown that the definable
relations (concepts) of an FFD coordinate geometry are
exactly those relations that are closed under the automorphisms of the
geometry and are definable over the (ordered) field; moreover
they are exactly the relations closed under the affine automorphism
and are definable over the (ordered) field. These results of
Section~\ref{sec:geometry-automorphisms}, and in particular
Corollary~\ref{cor-erlangen}, yield a simple but powerful strategy for
comparing the conceptual content of different geometries: \emph{If we
wish to compare the sets of concepts of FFD coordinate
geometries over the same field and of the same dimension, it is enough
to compare their affine automorphism groups.}

This insight is strongly connected to the famous Erlangen program of
Felix Klein's because it directly states that to understand the
concepts of these geometries it is enough to understand their affine
automorphisms.

In the second part of this paper, we will apply these results to
investigate how some historically significant geometries including
those of Galilean, Newtonian, Relativistic and Minkowski spacetimes,
as well as Euclidean Geometry are connected conceptually. There, we
are going to introduce a geometry $\Lclst$ corresponding to late
classical kinematics, that contains all the concepts of the above
mentioned ones. So among others, we show the connection depicted by
Figure~\ref{fig:hasse0}.

\begin{figure}
  \begin{tikzpicture}[scale=1]
    \tikzstyle{edge}=[thick]
    \node (LC) at (0,6){$\ca{\Lclst}$};
    \node (R) at (5,3){$\ca{\relst}=\ca{\minkst}$};
    \node (E) at (1,3){$\ca{\euclg}$};
    \node (G) at (-2,2){$\ca{\galst}$};
    \node (N) at (-2,4){$\ca{\newtst}$};
    \node (A) at (0,0) {$\ca{\OAff}$};

    \draw[edge] (A) to (G) to (N) to (LC);
    \draw[edge] (A) to (E) to (LC);
    \draw[edge] (A) to (R) to (LC);
  \end{tikzpicture}
  \caption{Hasse diagram showing how the concept-sets associated with
  various historically significant geometries are related to one another by subset inclusion.}
  \label{fig:hasse0}
\end{figure}
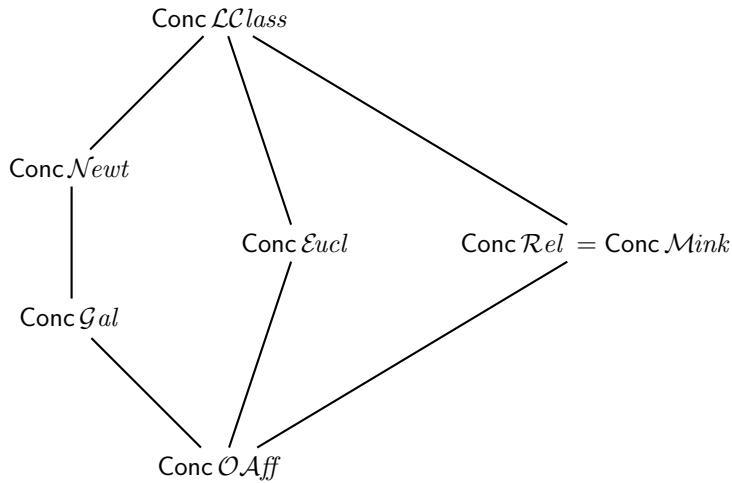
  
There are some natural problems in this research area which are left
open. Here, we list some of them:
\begin{problem}
  Do the results of Section~\ref{sec:geometry-automorphisms} hold for
  all field-definable coordinate geometries?
\end{problem}
The concept-sets of field-definable coordinate geometries form a
complete lattice under $\subseteq$.  This is so because complete
semilattices are complete lattices; and it is a complete
join-semilattice where the supremum of any subset is the geometry
obtained by taking all of the associated relations simultaneously.
\begin{problem}\label{problem:lattice}
  Do the concept-sets of FFD coordinate geometries over $\field$ form
  a lattice under $\subseteq$?
\end{problem}
A positive answer to Problem~\ref{problem:lattice} could be obtained
by answering Problem~\ref{problem:meet} positively.
\begin{problem}\label{problem:meet}
  Let $\geometry_1$ and $\geometry_2$ be two FFD coordinate
  geometries over $\field$. Is there an FFD coordinate geometry $\geometry_3$ such
  that $\ca\geometry_1 \cap \ca\geometry_2 = \ca\geometry_3$?
\end{problem}

\bibliographystyle{amsalpha}
\bibliography{LogRel12019}

\end{document}